\documentclass[12pt]{amsart}

\setlength{\topmargin}{-0.5cm} \setlength{\textwidth}{15cm}
\setlength{\textheight}{22.6cm} \setlength{\topmargin}{-0.25cm}
\setlength{\headheight}{1em} \setlength{\headsep}{0.5cm}
\setlength{\oddsidemargin}{0.40cm}
\setlength{\evensidemargin}{0.40cm}
\usepackage{amssymb} 
\usepackage{amsthm}
\usepackage{amscd}
\newtheorem{thm}{Theorem}[section]
\newtheorem{lem}[thm]{Lemma}
\newtheorem{lem-dfn}[thm]{Lemma-Definition}
\newtheorem{prop}[thm]{Proposition}

\theoremstyle{definition}
\newtheorem{defn}[thm]{Definition}

\newtheorem{ex}[thm]{Example}

\newtheorem*{acknowledgement}{Acknowledgement}
\theoremstyle{remark}
\newtheorem{clm}{Claim}
\newtheorem{rem}[thm]{Remark}
\numberwithin{equation}{section}

\newcommand{\thmref}[1]{Theorem~\ref{#1}}
\newcommand{\lemref}[1]{Lemma~\ref{#1}}

\newcommand{\proref}[1]{Proposition~\ref{#1}}

\newcommand{\defref}[1]{Definition~\ref{#1}}

%

\DeclareMathOperator{\Spec}{Spec}
\DeclareMathOperator{\spec}{Spec}

\DeclareMathOperator{\Proj}{Proj}
\DeclareMathOperator{\proj}{Proj}

\DeclareMathOperator{\supp}{Supp}

\DeclareMathOperator{\Img}{Im}



\DeclareMathOperator{\chara}{char}


\DeclareMathOperator{\di}{div}

\DeclareMathOperator{\nr}{nr}
\DeclareMathOperator{\br}{\bar r}
\DeclareMathOperator{\gon}{gon}

%

\newcommand{\m}{\mathfrak m}


%
\newcommand{\PP}{\mathbb P}

\newcommand{\Z}{\mathbb Z}

\newcommand{\bbP}{\ensuremath{\mathbb P}}

\newcommand{\bbZ}{\ensuremath{\mathbb Z}}


\newcommand{\cB}{\mathcal B}

\newcommand{\cF}{\mathcal F}

\newcommand{\cO}{\mathcal O}




%
%

%
\renewcommand{\:}{\colon}

\newcommand{\fl}[1]{\left\lfloor #1 \right\rfloor}
\newcommand{\ce}[1]{\left\lceil #1 \right\rceil}

\newcommand{\defset}[2]{{\left\{#1\,\left| \,#2 \right. \right\}}}

\pagestyle{plain}

\begin{document}

\title{The normal reduction number of two-dimensional cone-like singularities}
\author{Tomohiro Okuma}
\address[Tomohiro Okuma]{Department of Mathematical Sciences, 
Yamagata University,  Yamagata, 990-8560, Japan.}
\email{okuma@sci.kj.yamagata-u.ac.jp}
\author{Kei-ichi Watanabe}
\address[Kei-ichi Watanabe]{Department of Mathematics, College of Humanities and Sciences, 
Nihon University, Setagaya-ku, Tokyo, 156-8550, Japan}
\email{watanabe@math.chs.nihon-u.ac.jp}
\author{Ken-ichi Yoshida}
\address[Ken-ichi Yoshida]{Department of Mathematics, 
College of Humanities and Sciences, 
Nihon University, Setagaya-ku, Tokyo, 156-8550, Japan}
\email{yoshida@math.chs.nihon-u.ac.jp}
\thanks{This work was partially supported by JSPS Grant-in-Aid 
for Scientific Research (C) Grant Numbers, 17K05216, 19K03430}
\subjclass[2010]{Primary 13B22; Secondary 14B05, 14J17}
\keywords{Normal reduction number, two-dimensional singularity, homogeneous hypersurface singularity}

\begin{abstract}
Let $(A, \m)$ be a normal two-dimensional local ring 
and $I$ an $\m$-primary integrally closed ideal with a minimal reduction $Q$. Then we calculate the numbers: 
$\nr(I) = \min\{n \;|\; \overline{I^{n+1}} = Q\overline{I^n}\}, 
\quad \bar{r}(I) = \min\{n \;|\; \overline{I^{N+1}} =
 Q\overline{I^N}, \forall N\ge n\}$, 
$\nr(A)$, and $\bar{r}(A)$, where $\nr(A)$ (resp. $\bar{r}(A)$) is the maximum of  
$\nr(I)$ (resp. $\bar{r}(I)$) for all $\m$-primary integrally closed ideals $I\subset A$. 
Then we have that $\bar{r}(A) \le p_g(A) + 1$, where $p_g(A)$ is the geometric genus of $A$. 
In this paper, we give an upper bound of $\bar{r}(A)$ when $A$ is a cone-like singularity
 (which has a minimal resolution whose 
exceptional set is a single smooth curve) and show, in particular, 
if $A$ is a hypersurface singularity defined by a 
homogeneous polynomial of degree $d$, then $\bar{r}(A)= \nr(\m) = d-1$. 
Also we give an example of $A$ and $I$ so that 
$\nr(I) = 1$ but 
$\bar{r}(I)= \bar{r}(A) = p_g(A) +1=g+1$ for every integer $g \ge 2$.
\end{abstract}

\maketitle
\section{Introduction}

For a Noetherian local ring $(A,\m)$ and an 
$\m$-primary ideal $I$,  
let $\overline{I}$ denote the integral closure, that is, 
$z \in \overline{I}$ if and only if 
$z^n+c_1z^{n-1}+\cdots+c_n=0$ 
for some $n \ge 1$ and $c_i \in I^i$ $(i=1,\ldots,n)$. 
\par 
For a given Noetherian local ring $(A,\m)$ and an 
$\m$-primary integrally closed ideal $I$ (i.e. $\overline{I}=I)$ with minimal 
reduction $Q$, we are interested in the question:

\par \vspace{2mm}
{\bf Question.} What is the minimal number $r$ such that $\overline{I^r} 
\subset Q$ for every $\m$-primary ideal $I$ of $A$ and 
its minimal reduction $Q$? 

\par\vspace{2mm}
One example of this direction is the Brian\c con-Skoda Theorem saying;
\par \noindent
If  $(A,\m)$ is a $d$-dimensional rational singularity (characteristic 
$0$) or an F-rational ring (characteristic $p>0$), then $\overline{I^d} \subset Q$ (cf. \cite{LT}, \cite{HH}).
 
\par \vspace{2mm}
The aim of our paper is to answer this question in the case of normal 
two-dimensional local rings 
using resolution of singularities.
In what follows, we always assume that $(A, \m, k)$ is an excellent normal two-dimensional local ring such that $k$ is algebraically closed and $k\subset A$.

\par
In our previous paper \cite{OWY4}
we defined the notion of two kinds of normal reduction numbers.
For any $\m$-primary integrally closed ideal $I \subset A$ 
(e.g. the maximal ideal $\m$) and its minimal reduction $Q$, 
we define two normal reduction numbers as follows:
\begin{eqnarray*}
\nr(I) &=& \min\{n \in \bbZ_{\ge 0} \,|\, \overline{I^{n+1}}=Q\overline{I^n}\}, \\
\br(I) &=& \min\{n \in \bbZ_{\ge 0} \,|\, \overline{I^{N+1}}=Q\overline{I^N} \; \text{for every $N \ge n$}\}.
\end{eqnarray*}

These are analogues of the reduction number $r_Q(I)$ of an ideal 
$I \subset A$. 
But in general, $r_Q(I)$ is not independent of the choice of a minimal 
reduction $Q$. 
On the other hand, we can show that $\nr(I)$ and $\br(I)$ are 
independent of 
the choice of $Q$ (see e.g. \cite[Theorem 4.5]{Hun}). 
It is obvious by definition that $\nr(I)\le \br(I)$, 
but an example with 
$\nr(I)< \br(I)$ seems to be  \textit{not} known until now. 
We will give a series of examples with $\nr(I)=1$ and 
$\br(I) =p_g(A) +1=g+1$ for all integers $g\ge 2$ in Example \ref{nr<br}.
Also, we define 
\begin{eqnarray*}
\nr(A) &=& \max\{\nr(I) \,|\, 
\text{$I$ is an $\m$-primary integrally closed ideal of $A$}\}, \\
\br(A) &=& \max\{\br(I) \,|\, \text{$I$ is an $\m$-primary integrally closed 
ideal of $A$}\}.
\end{eqnarray*}

\par
We expect that these invariants of $A$ characterize \lq\lq  good" singularities.

\begin{prop}
[{cf. \cite{Li} and \cite[Remark 2.3]{OWY3}}]
The following are equivalent:
\begin{enumerate}
\item $A$ is a rational singularity (i.e., $p_g(A) =0$).
\item $\br(A) =1$.
\item $\nr(A)=1$.
\end{enumerate}
\end{prop}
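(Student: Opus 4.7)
The plan is to combine the global bound $\br(A) \le p_g(A)+1$ announced in the abstract (proved in the authors' earlier paper \cite{OWY4}) with Lipman's classical characterization of two-dimensional rational singularities from \cite{Li}: $A$ is rational if and only if for every $\m$-primary integrally closed ideal $I$ with minimal reduction $Q$ one has $I^2 = QI$. The three implications then essentially line up automatically.

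For $(1)\Rightarrow(2)$: if $p_g(A)=0$, the cited bound immediately gives $\br(A) \le 1$. To rule out the degenerate case $\br(A)=0$, I would exhibit an $\m$-primary integrally closed ideal $I$ that strictly contains every minimal reduction $Q$ (so that $\nr(I)\ge 1$). Since $\dim A = 2$, such an $I$ always exists: one may take $I=\m$ when $A$ is singular (then $\mu(\m)\ge 3$, while a minimal reduction has only two generators), and $I=\m^2$ when $A$ is regular (three generators $x^2,xy,y^2$ versus two in any minimal reduction). This gives $\br(A)=1$. The implication $(2)\Rightarrow(3)$ follows instantly from $\nr(A)\le\br(A)$ together with the same lower bound $\nr(A)\ge 1$.

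For $(3)\Rightarrow(1)$: assuming $\nr(A)=1$, for every $\m$-primary integrally closed ideal $I$ with minimal reduction $Q$ we have $\overline{I^2} = Q\,\overline{I} = QI$. The chain $QI \subseteq I^2 \subseteq \overline{I^2} = QI$ collapses to $I^2 = QI$, verifying the hypothesis of Lipman's criterion for every such $I$, hence $A$ is rational.

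The argument is essentially bookkeeping around two deep inputs: the previously established bound $\br(A)\le p_g(A)+1$ and Lipman's equivalence. The only real obstacle is checking that the trivial lower bound $\nr(A)\ge 1$ actually holds (so that $(1)$ gives equality and not just $\le 1$); this splits cleanly into the singular and regular cases as above.
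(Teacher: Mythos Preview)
Your argument is correct but takes a genuinely different route from the paper's. For $(1)\Rightarrow(2)$ the paper does not use the bound $\br(A)\le p_g(A)+1$; instead it invokes \cite[Remark 2.3]{OWY3} (rational $\Leftrightarrow$ every integrally closed $\m$-primary ideal is a $p_g$-ideal) together with \cite[Theorem 4.1]{OWY3} ($p_g$-ideal $\Leftrightarrow$ $\br(I)=1$), which in fact yields the full equivalence $(1)\Leftrightarrow(2)$ at once. For $(3)\Rightarrow(1)$ the paper avoids Lipman's criterion entirely: it applies the identity $2q(I)+\ell_A(\overline{I^2}/QI)=q(2I)+p_g(A)$ from \proref{q(nI)} (3) to an ideal $I$ with $q(I)=q(2I)=0$ (a $p_g$-ideal, whose existence is guaranteed by \cite[Remark 2.3]{OWY3}), so that $\nr(I)\le 1$ forces $p_g(A)=0$ directly. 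Your approach has the virtue of bypassing the $p_g$-ideal machinery and the $q$-invariant formula, trading them for two external black boxes; the paper's approach is more internal to the authors' framework.

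Two small points. First, in your $(3)\Rightarrow(1)$ step you write $\overline{I^2}=QI$ for every integrally closed $I$, but $\nr(A)=1$ only gives $\nr(I)\le 1$; when $\nr(I)=0$ one has $I=Q$ and then $I^2=QI$ trivially, though $\overline{I^2}=\overline{Q^2}$ need not equal $Q^2$. The conclusion $I^2=QI$ survives either way, so the argument is fine, but the wording should be adjusted. Second, the direction of ``Lipman's criterion'' you actually need is the converse (if $I^2=QI$ for all integrally closed $\m$-primary $I$, then $A$ is rational); you should check that this is really contained in \cite{Li} or supply a supplementary reference, as Lipman's paper is primarily establishing the forward implication. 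Also, the bound $\br(A)\le p_g(A)+1$ is attributed in the paper to \cite{OWY1}, not \cite{OWY4}.
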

\begin{proof}
Since there exists an $\m$-primary integrally closed ideal of $A$ which is not a parameter ideal, we have $\nr(A)\ge 1$.  Thus (2) implies (3).
By \cite[Remark 2.3]{OWY3}, $A$ is a rational singularity if and only if every $\m$-primary integrally closed ideal is a $p_g$-ideal.
Furthermore, by \cite[Theorem 4.1]{OWY3}, an $\m$-primary integrally closed ideal $I$ is a $p_g$-ideal if and only if $\br(I)=1$.
Therefore, (1) and (2) are equivalent. 
From \proref{q(nI)} (3) below, we obtain that (3) implies (1), because there exists an $\m$-primary integrally closed ideal $I$ with $q(I)=q(2I)=0$ (cf. \cite[Remark 2.3]{OWY3}).
\end{proof}

\begin{prop}
[{cf. \cite{o.h-ell}}]
If $A$ is an elliptic singularity, then $\nr(A) = \br(A)=2$, 
where we say that $A$ is an elliptic singularity 
if the arithmetic genus of the fundamental cycle
 (see \defref{d:cyc}) on 
a resolution 
of $A$ is $1$. 
\end{prop}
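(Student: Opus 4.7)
The plan is to establish both $\bar r(A) \ge 2$ and $\bar r(A) \le 2$; combined with the inequality $\nr(A) \le \bar r(A)$ and the fact that the lower-bound argument also yields $\nr(A) \ge 2$, this gives $\nr(A) = \bar r(A) = 2$.

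For the lower bound, let $Z_f$ be the fundamental cycle on a resolution $X \to \Spec A$, so $p_a(Z_f) = 1$ by hypothesis. From the short exact sequence $0 \to \cO_X(-Z_f) \to \cO_X \to \cO_{Z_f} \to 0$ together with $H^2(\cO_X(-Z_f)) = 0$, we get a surjection $H^1(\cO_X) \twoheadrightarrow H^1(\cO_{Z_f})$, hence $p_g(A) \ge p_a(Z_f) = 1$. Thus $A$ is not rational, and the preceding proposition forces $\bar r(A) \ge 2$ (and likewise $\nr(A) \ge 2$).

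For the upper bound, fix an $\m$-primary integrally closed ideal $I$ with minimal reduction $Q$ and choose a resolution $\pi \colon X \to \Spec A$ on which $I \cO_X = \cO_X(-Z)$ for an anti-nef cycle $Z$; then $\overline{I^n} = \Gamma(X, \cO_X(-nZ))$. Via \proref{q(nI)} and the framework of \cite{OWY3, OWY4}, the desired equalities $\overline{I^{n+1}} = Q\overline{I^n}$ are controlled by the invariants $q(nI) := h^1(\cO_X(-nZ))$, and it suffices to prove $q(nI) = q((n+1)I)$ for all $n \ge 2$, uniformly in $I$ and $Z$.

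The heart of the argument, and the main obstacle, is precisely this uniform cohomological stabilization beginning at $n=2$. The elliptic hypothesis enters through the well-known fact that every positive exceptional cycle $C$ on a resolution of an elliptic $A$ satisfies $p_a(C) \le 1$. Combined with the long exact cohomology sequences of $0 \to \cO_X(-(n+1)Z) \to \cO_X(-nZ) \to \cO_Z(-nZ) \to 0$, this bounds each decrement $q(nI) - q((n+1)I)$; however, a single-step bound does not suffice when $p_g(A) > 1$, and one must propagate the estimate along Yau's elliptic sequence, which is where the structure-theoretic input from \cite{o.h-ell} enters. Concluding that the decrements vanish for $n \ge 2$ gives $\bar r(I) \le 2$ for every such $I$, and hence $\bar r(A) \le 2$.
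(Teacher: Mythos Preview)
The paper gives no proof of this proposition; it simply records the result with a citation to \cite{o.h-ell}. So there is no argument in the paper to compare against beyond that reference.

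Your lower bound is correct: since $h^0(\cO_{Z_f})=1$ one has $h^1(\cO_{Z_f})=p_a(Z_f)=1$, and the surjection $H^1(\cO_X)\twoheadrightarrow H^1(\cO_{Z_f})$ gives $p_g(A)\ge 1$; hence $A$ is not rational and the preceding proposition forces $\nr(A)\ge 2$ and $\br(A)\ge 2$.

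For the upper bound there is an indexing slip. By \proref{q(nI)}\,(4), $\br(I)\le 2$ is equivalent to $q(I)=q(2I)$; your stated sufficient condition ``$q(nI)=q((n+1)I)$ for all $n\ge 2$'' only says $q(2I)=q(3I)$, which yields $\br(I)\le 3$. The stabilization must begin at $n=1$, not $n=2$.

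More substantively, your sketch does not actually carry out the upper-bound step: you correctly note that the elliptic hypothesis bounds $p_a$ of every positive exceptional cycle by $1$ and that Yau's elliptic sequence is the relevant structural tool, but then hand the work back to \cite{o.h-ell}. That is exactly what the paper itself does, so in that sense you are aligned with it; however, as a self-contained proof the proposal is incomplete, since the assertion ``the decrements vanish'' is the entire content of the result and is not established here.
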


\par
One of the main aims is to compare these invariants 
with geometric invariants (e.g. the geometric genus $p_g(A)$). 
In \cite{OWY2} we have shown that $\br(A)\le p_g(A)+1$. 
But actually, it turns out that we have a much better bound for $\nr(A)$.  

\begin{thm}[\textrm{\cite[Theorem 2.9]{OWY4}}] \label{1.1} 
$p_g(A) \ge \binom{\nr(A)}{2}$.
\end{thm}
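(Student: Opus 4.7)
I would start by choosing an $\m$-primary integrally closed ideal $I$ that realizes $\nr(I) = \nr(A) =: s$, together with any minimal reduction $Q = (a,b)$ of $I$ (which exists since $k$ is algebraically closed). Passing to a resolution $\pi \colon X \to \Spec A$ on which $I\OO_X = \OO_X(-Z)$ is invertible (with $Z$ an antinef exceptional cycle), the reduction property $QI^k = I^{k+1}$ for $k \gg 0$ forces $Q\OO_X = \OO_X(-Z)$. Then $\overline{I^n} = H^0(X, \OO_X(-nZ))$ for every $n \ge 0$, and I set $q_n := \dim_k H^1(X, \OO_X(-nZ))$, so that $q_0 = p_g(A)$ and $q_n = 0$ for $n \gg 0$.

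The cohomological core is the Koszul short exact sequence
\[
0 \to \OO_X(-(n-1)Z) \to \OO_X(-nZ)^{\oplus 2} \to \OO_X(-(n+1)Z) \to 0
\]
coming from $(a,b)$. Since $\pi$ has one-dimensional fibres, $H^2$ vanishes on these line bundles, and the long exact cohomology sequence reduces to
\[
0 \to \overline{I^{n+1}}/Q\overline{I^n} \to H^1(\OO_X(-(n-1)Z)) \to H^1(\OO_X(-nZ))^{\oplus 2} \to H^1(\OO_X(-(n+1)Z)) \to 0.
\]
Taking alternating dimensions yields, for every $n \ge 1$, the identity $\ell_A(\overline{I^{n+1}}/Q\overline{I^n}) = q_{n-1} - 2q_n + q_{n+1}$. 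Setting $d_n := q_n - q_{n+1}$, this rearranges to $\ell_A(\overline{I^{n+1}}/Q\overline{I^n}) = d_{n-1} - d_n \ge 0$, so $(d_n)$ is non-increasing and a strict drop at step $n$ occurs exactly when $\overline{I^{n+1}} \ne Q\overline{I^n}$.

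The remainder is a short combinatorial count. Since $\nr(I) = s$ gives $\overline{I^{n+1}} \ne Q\overline{I^n}$ for $n = 0, 1, \dots, s-1$, applying the identity at $n = 1, \dots, s-1$ produces
\[
d_0 > d_1 > \cdots > d_{s-1} \ge 0,
\]
i.e., $s$ distinct non-negative integers, whence $d_j \ge s - 1 - j$. Summing,
\[
p_g(A) = q_0 = \sum_{n \ge 0} d_n \ge \sum_{j=0}^{s-1} d_j \ge \sum_{j=0}^{s-1}(s-1-j) = \binom{s}{2}.
\]
The main obstacle is the resolution-theoretic set-up: verifying $Q\OO_X = \OO_X(-Z)$ (so that the Koszul complex is genuinely short exact rather than a complex with a torsion cokernel contributing extra length) and $\overline{I^n} = H^0(X,\OO_X(-nZ))$ for all $n \ge 0$. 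Both are standard consequences of the theory of antinef cycles on two-dimensional resolutions; once in hand, the cohomological identity and the combinatorial conclusion above are immediate.
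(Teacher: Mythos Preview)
The paper does not prove this theorem; it is quoted from \cite[Theorem 2.9]{OWY4}. Your reconstruction is correct and is essentially the argument one expects in \cite{OWY4}: the identity $\ell_A(\overline{I^{n+1}}/Q\overline{I^n})=q_{n-1}-2q_n+q_{n+1}$ you derive from the Koszul sequence is precisely \proref{q(nI)}\,(3) as recalled in this paper, and the monotonicity $d_n\ge 0$ is \proref{q(nI)}\,(1)--(2). The remaining telescoping/pigeonhole step, turning the strict inequalities $d_0>d_1>\cdots>d_{s-1}\ge 0$ into $p_g(A)\ge\binom{s}{2}$, is then immediate.

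One small point of care: you write ``any minimal reduction $Q$'', but the exactness of the Koszul sequence on $X$ requires that $a,b$ generate $\cO_X(-Z)$, i.e., $Q\cO_X=\cO_X(-Z)$. This holds for a \emph{general} choice of two elements of $I$ (since $\cO_X(-Z)$ is globally generated and $k$ is infinite), which automatically gives a minimal reduction; it is safest to phrase it that way rather than claim it for an arbitrary minimal reduction. With that adjustment the argument is complete.
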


\section{The sequence $q(n I)$ and the normal reduction numbers}

Let $(A,\m)$ be an excellent two-dimensional normal local ring 
containing an algebraically closed field $k \cong A/\m$ 
and 
$f\:X \to \spec (A)$ a resolution of singularities with exceptional 
divisor $E:=f^{-1}(\m)$.
We call a divisor supported on $E$ a {\em cycle}.  
A cycle $D$ is said to be positive if $D>0$. 
Let $I = I_Z\subset A$ be an $\m$-primary integrally closed 
ideal {\em represented} by an anti-nef cycle $Z>0$ on $X$, that is, $I\mathcal{O}_X$ is invertible and 
$I\mathcal{O}_X=\mathcal{O}_X(-Z)$.  

For any coherent sheaf $\cF$ on $X$, we write $H^i(\cF)=H^i(X,\cF)$ and $h^i(\cF)=\ell_A(H^i(\cF))$.
\begin{defn} \label{qI}
Put $q(0 I)=h^1(\mathcal{O}_X)$, 
$q(I)= h^1(\mathcal{O}_X(-Z))$ and 
$q(nI)= q(\overline{I^n}) = h^1(\mathcal{O}_X(-nZ))$ for every 
integer $n \ge 1$;
these are independent of the representation of $I$ (\cite[Lemma 3.4]{OWY1}).  
By definition, 
$p_g(A)=q(0I)$.
\end{defn}

\par
We have seen in \S2 of \cite{OWY4} and \S3 of \cite{OWY2} the following results.
\begin{prop}\label{q(nI)}
The following statements hold. 
\begin{enumerate}
\item $0 \le q(I) \le p_g(A);$ and 
\item $q(kI) \ge q((k+1)I)$ for every integer $k \ge 1$ and if $q(nI) = q((n+1)I)$ for some $n\ge 0$, 
then $q(nI) = q(mI)$ for every $m\ge n$. 
Hence $q(nI) = q((n+1)I)$ for every $I$ and $n\ge p_g(A)$.
\item 
For any integer $n \ge 1$, we have 
\[
2 \cdot q(nI) + \ell_A(\overline{I^{n+1}}/Q\overline{I^n})
=q((n+1)I)+q((n-1)I).  
\]
Hence we can describe $\nr(I), \br(I)$ as follows.
\item We have
\begin{eqnarray*}
\nr(I) &=& \min\{n \in \bbZ_{\ge 0} \,|\, 
q((n-1)I)-q(nI)=q(nI)-q((n+1)I) \},\\
\br(I) &=& \min\{n \in \bbZ_{\ge 0}\,|\, q((n-1)I)=q(nI) \}.
\end{eqnarray*}
\end{enumerate}
\end{prop}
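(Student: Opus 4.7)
The plan is to prove identity (3) first via a Koszul complex computation on the resolution $X$, then derive (2), (1), and (4) as formal consequences. Set $L := \mathcal{O}_X(-Z)$ and let $a, b \in Q$ be generators of the minimal reduction. Because $Q$ is a reduction of $I$, we have $Q \cdot I^m = I^{m+1}$ for some $m$; extending to $\mathcal{O}_X$ and canceling the invertible sheaf $L^m = I^m\mathcal{O}_X$ gives $Q\mathcal{O}_X = I\mathcal{O}_X = L$. Hence $a, b$ are global sections of $L$ that generate it everywhere, so they have no common zero on $X$, and the twisted Koszul complex
\[
0 \to L^{n-1} \xrightarrow{(-b,\,a)} L^n \oplus L^n \xrightarrow{(a,\,b)} L^{n+1} \to 0
\]
is a short exact sequence of sheaves for every $n \ge 1$. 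Taking the long exact cohomology sequence (which terminates in degree $1$ since $X$ is two-dimensional), using $H^0(L^k) = \overline{I^k}$, and identifying the image of the right-hand map with $Q\overline{I^n}$, the finite-length tail reads
\[
0 \to \overline{I^{n+1}}/Q\overline{I^n} \to H^1(L^{n-1}) \to H^1(L^n)^{\oplus 2} \to H^1(L^{n+1}) \to 0,
\]
and the alternating sum of $A$-lengths is precisely identity (3).

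Next set $d_n := q((n-1)I) - q(nI)$. Rewriting (3) as $d_n - d_{n+1} = \ell_A(\overline{I^{n+1}}/Q\overline{I^n}) \ge 0$ shows that $(d_n)_{n\ge 1}$ is non-increasing. The finiteness of the integral closure of the Rees algebra $A[It]$, standard for the excellent ring $A$, gives $\overline{I^{n+1}} = Q\overline{I^n}$ for $n \gg 0$, so $d_n = 0$ eventually and $d_n \to 0$; combined with non-increase, this forces $d_n \ge 0$ for every $n \ge 1$, which is the monotonicity in (2). The stabilization claim is immediate: $d_n = 0$ together with $0 \le d_{n+1} \le d_n$ forces $d_{n+1} = 0$, and induction extends constancy of $q$ to every $m \ge n$. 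Because $(q(nI))$ is a non-increasing integer sequence in $[0, p_g(A)]$, each strict decrease lowers $q$ by at least one, so there are at most $p_g(A)$ strict decreases and they lie in an initial segment, giving $q(nI) = q((n+1)I)$ for every $n \ge p_g(A)$. Inequality (1) follows from (3) at $n = 1$: $p_g(A) - q(I) = d_1 \ge d_2 \ge 0$.

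Part (4) is a translation. By (3), $\overline{I^{n+1}} = Q\overline{I^n}$ is equivalent to $d_n = d_{n+1}$, which yields the formula for $\nr(I)$. For $\br(I)$, the condition ``$\overline{I^{N+1}} = Q\overline{I^N}$ for all $N \ge n$'' becomes ``$d_k = d_{k+1}$ for all $k \ge n$'', which by the non-increase and non-negativity of $(d_k)$ is equivalent to $d_n = 0$, i.e., $q((n-1)I) = q(nI)$.

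I expect the main obstacle to be technical rather than conceptual: carefully verifying $Q\mathcal{O}_X = I\mathcal{O}_X$ so that the Koszul complex is globally exact, and invoking finiteness of the integral closure of $A[It]$ to pin down the tail behavior of $(d_n)$. Neither is hard, but both rest on standard but nontrivial facts about reductions and excellent rings that deserve explicit citation before the argument closes.
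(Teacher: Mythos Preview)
The paper does not actually prove this proposition; it records the statements and cites \cite{OWY4} and \cite{OWY3} for the arguments. Your Koszul-complex derivation of (3) is the standard one found in those sources, and your translation (4) is correct once (2) and (3) are both available.

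There is, however, a genuine gap in your derivation of (2) from (3). From $\overline{I^{n+1}}=Q\overline{I^n}$ for $n\gg 0$ together with identity (3) you obtain only that $d_n=d_{n+1}$ for $n\gg 0$, i.e.\ that $(d_n)$ is eventually \emph{constant}; your sentence ``so $d_n=0$ eventually'' does not follow. Nothing written so far excludes the eventual constant being some $c<0$: the sequence would still be non-increasing, and $q(nI)=q(NI)-c(n-N)\to +\infty$ is not ruled out by $q(nI)\ge 0$ alone. Without $d_n\ge 0$ the stabilization clause of (2), inequality (1), and the $\br$-formula in (4) all collapse. The cleanest repair---and the route taken in the cited papers---is to prove $q(nI)\ge q((n+1)I)$ directly rather than via (3): choose a general $h\in I$, so that $\di_X(h)=Z+H$ with $H$ containing no exceptional component; multiplication by $h$ yields the short exact sequence
\[
0\to \cO_X(-nZ)\xrightarrow{\cdot h} \cO_X(-(n+1)Z)\to \cO_X(-(n+1)Z)|_H\to 0,
\]
and since $H$ is finite over the affine curve $\{h=0\}\subset\spec A$ it is itself affine, so $H^1$ of the cokernel vanishes and $H^1(\cO_X(-nZ))\twoheadrightarrow H^1(\cO_X(-(n+1)Z))$. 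With monotonicity established independently, the rest of your deductions go through as written.
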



\section{The vanishing theorem and the main Theorem}

Our goal is to give an upper bound of $\nr(A)$ and $\br(A)$ 
for cone-like singularities.
For that purpose, we use the vanishing theorem of 
R\"ohr (\cite[Theorem 1.7]{rohr}). 
First we review the fundamental cycle on  a resolution and the computation 
sequence.

  Let $f\: X \to \Spec(A)$ be any resolution of singularity of 
$\Spec(A)$ and 
$E = \bigcup_{i=1}^r E_i$ be the exceptional set of $X$.  

\begin{defn}\label{d:cyc}

\begin{enumerate}
\item A divisor on $X$ is called {\it nef} (resp. {\it anti-nef})  if $D E_i\ge 0$ (resp. $D E_i\le 0$) for every $E_i$.
\item There exists a unique minimal positive anti-nef cycle; 
 we call the cycle {\em the fundamental cycle} of $X$ and write $\bbZ_X$.
\item For a positive cycle $Y$ on $X$, we define an {\em arithmetic genus} $p_a(Y)$ of $Y$ by $p_a(Y)=1-\chi(\cO_Y)=1-h^0(\cO_Y)+h^1(\cO_Y)$. 
By the Riemann-Roch theorem, we have
\[
p_a(Y) = \dfrac{ Y^2 + K_X Y}{2} + 1,
\]
where $K_X$ is the canonical divisor on $X$.
This formula implies
\[
p_a(Y_1+Y_2) = p_a(Y_1)+p_a(Y_2)+Y_1Y_2-1.
\]
It is known that the arithmetic genus of the fundamental cycle is independent of the choice of a resolution. 

\item A sequence of 
cycles $0=Y_0 < Y_1<Y_2 <\ldots < Y_N$ is called a {\em computation sequence} for $\bbZ_X$ if $Y_N$ is anti-nef, $Y_{i+1} = Y_i + E_{j_i}$ for every $i$, $0\le i \le N-1$, $Y_1=E_{j_0}$ is an irreducible component of $E$, $Y_i E_{j_i} >0$ for $i \ge 1$. 
It is easy to show that $Y_N = \bbZ_X$ in this case.
\item We denote by $\cB_X$ the set of positive cycles on $X$ appearing in some computation sequence for $\bbZ_X$.  
If $W$ is a connected subvariety of $E$, then we denote 
by $\bbZ_W$ the fundamental cycle of $W$ (namely, $\bbZ_W$ is the 
minimal cycle supported on $W$ such that $E_i \bbZ_W\le 0$ for every irreducible curve 
$E_i\subset W$).
\item  When $p_g(A)>0$, a positive cycle $C_X$ on $X$ is 
called the {\em cohomological cycle} of $X$ 
if $h^1(\cO_{C_X})= p_g(A)$ and also $C_X$ is the minimal cycle with this property. In \cite{Re}, it is shown that the cohomological cycle exists.
If $W\subset E$ is a reduced connected subvariety with 
$p_a(\bbZ_W)>0$,  we call a positive cycle 
$C_W$ the {\em cohomological cycle} of $W$ if $h^1(\cO_{C_W})$ takes the 
maximal value among the positive cycles supported on $W$ and 
$C_W$ is minimal with this property (cf. \cite[3.4]{o.h-ell}). 
\item For an anti-nef cycle $Z$ on $X$, let $Z^{\bot}=\sum_{ZE_i=0}E_i$.
\end{enumerate}
\end{defn}

Note that if $\{Y_i\}_{i=0}^N$ is a computation sequence for $\bbZ_X$, then $h^0(\cO_{Y_i})=1$ and $h^1(\cO_{Y_i})\le h^1(\cO_{Y_{i+1}})$ for $i\ge 1$ (see \cite[\S 2]{la.me}). In particular, $p_a(Y_i)=h^1(\cO_{Y_i})$ for $i\ge 1$. 

The following theorem holds true in any characteristic (cf. \cite[Ch.~4, Exe.~15]{Re}).

\begin{thm}[R\"ohr's Vanishing Theorem] \label{l:rohr}
Let $D$ be a divisor on $X$.  
Then we have $H^1(X, \cO_X(D)) = 0$ if $YD> 2 p_a(Y) -2$ 
for every $Y \in \cB_X$. 

\end{thm}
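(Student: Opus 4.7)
The plan is to prove the theorem in two steps: first reduce, via the theorem on formal functions, to vanishing of $H^1$ on finite thickenings of the exceptional fiber $E$, and then prove that vanishing by induction along a suitably extended computation sequence. Since $f\colon X\to \Spec A$ is proper with one-dimensional fibers over $\m$, one has $H^1(X,\cO_X(D))=0$ if and only if $H^1(Y,\cO_Y(D))=0$ for every positive cycle $Y$ supported on $E$. By concatenating computation sequences for $\bbZ_X$, any such $Y$ appears as the final term of an extended sequence $0=\tilde Y_0<\tilde Y_1<\cdots<\tilde Y_M=Y$ with $\tilde Y_{j+1}=\tilde Y_j+E_{k_j}$ and $\tilde Y_j E_{k_j}>0$. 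The identity $p_a(Y_1+Y_2)=p_a(Y_1)+p_a(Y_2)+Y_1Y_2-1$ together with the anti-nefness of $\bbZ_X$ shows that the hypothesis imposed on $Y\in\cB_X$ propagates to the corresponding bound $\tilde Y_jD>2p_a(\tilde Y_j)-2$ at every stage of the extended sequence.

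Next I would carry out the core induction, establishing $H^1(\tilde Y_j,\cO_{\tilde Y_j}(D))=0$ for each $j$. The base case $\tilde Y_1=E_{k_0}$ is an irreducible smooth curve, so $\deg\cO_{E_{k_0}}(D)=E_{k_0}D>2g(E_{k_0})-2$ yields the vanishing by Serre duality on a curve. For the inductive step I would use the exact sequence
$$
0\to \cO_{E_{k_j}}(D-\tilde Y_j)\to \cO_{\tilde Y_{j+1}}(D)\to \cO_{\tilde Y_j}(D)\to 0,
$$
coming from the identification $\cI_{\tilde Y_j}/\cI_{\tilde Y_{j+1}}\cong \cO_{E_{k_j}}(-\tilde Y_j)$. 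Combined with the inductive vanishing of $H^1(\cO_{\tilde Y_j}(D))$, the step reduces to $H^1(E_{k_j},\cO_{E_{k_j}}(D-\tilde Y_j))=0$, equivalently to the degree inequality $E_{k_j}(D-\tilde Y_j)>2g(E_{k_j})-2$ on the smooth curve $E_{k_j}$.

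The hard part will be this final numerical check. Expanding
$$
2p_a(\tilde Y_{j+1})-2=(2p_a(\tilde Y_j)-2)+(2g(E_{k_j})-2)+2\tilde Y_jE_{k_j},
$$
the hypothesis $\tilde Y_{j+1}D>2p_a(\tilde Y_{j+1})-2$ becomes a lower bound on $\tilde Y_jD+E_{k_j}D$; but both it and the hypothesis for $\tilde Y_j$ point the same direction, so a naive subtraction cannot isolate $E_{k_j}(D-\tilde Y_j)$. I expect the resolution to require strengthening the inductive claim, for example by tracking via Riemann--Roch the precise value $h^0(\tilde Y_j,\cO_{\tilde Y_j}(D))=\tilde Y_jD-p_a(\tilde Y_j)+1$ once $h^1$ vanishes, or by recasting the vanishing via Serre duality on $\tilde Y_j$ as the nonexistence of sections of $\cO_{\tilde Y_j}(K_X+\tilde Y_j-D)$. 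The positivity $\tilde Y_jE_{k_j}\ge 1$ built into any computation sequence is precisely what forces the hypothesis to be imposed for every $Y\in\cB_X$ rather than only for $\bbZ_X$.
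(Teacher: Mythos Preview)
The paper does not prove this theorem at all; it is quoted from R\"ohr \cite{rohr} (with a pointer to Reid \cite{Re} for positive characteristic), so there is no ``paper's own proof'' to compare against. What you have written is therefore an attempted proof of a cited result, and you yourself flag that it is incomplete.

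Your reduction via formal functions to $H^1(\cO_Y(D))=0$ for all $Y>0$ is correct, and it is enough to treat $Y=n\bbZ_X$ (not an arbitrary $Y$, contrary to what you wrote). Your observation that the extended sequence reduces to a single pass is also right: since $\bbZ_X$ is anti-nef, $(D-c\bbZ_X-Y_i)E_{k_j}\ge (D-Y_i)E_{k_j}$. The genuine gap is exactly where you locate it: within a single computation sequence the step requires $(D-Y_i)E_{j_i}>2g(E_{j_i})-2$, and this is \emph{not} a formal consequence of the inequalities $DY>2p_a(Y)-2$ for the two cycles $Y_i,Y_{i+1}\in\cB_X$ flanking that step. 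Your suggested repairs (tracking $h^0$ by Riemann--Roch, or passing to $K_X+\tilde Y_j-D$) do not, as stated, close the gap; the first still only controls global quantities on $\tilde Y_j$, not the restriction to $E_{k_j}$, and the second is just a restatement of what must be shown.

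R\"ohr's argument runs in the opposite direction and avoids this obstacle. One assumes $H^1(\cO_X(D))\ne 0$, takes a \emph{minimal} positive cycle $Y$ with $H^1(\cO_Y(D))\ne 0$, and from the sequences $0\to \cO_{E_i}(D-Y+E_i)\to \cO_Y(D)\to \cO_{Y-E_i}(D)\to 0$ deduces $(K_X+Y-D)E_i\ge 0$ for every component $E_i$ of $Y$. One then runs a (partial) computation sequence using only components of $Y$ to produce some $V\in\cB_X$ with $V\le Y$ and $VE_i\le 0$ for all $E_i\in\supp(Y)$, and checks from these two sign conditions that $(K_X+V-D)V\ge 0$, i.e.\ $DV\le 2p_a(V)-2$, contradicting the hypothesis. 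The point is that this contrapositive produces the relevant member of $\cB_X$ out of $Y$, rather than trying to force a prechosen sequence to work.
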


\begin{defn}\label{d:cone}
Assume that $(A, \m, k)$ is an excellent normal two-dimensional local ring such that $k$ is algebraically closed and $k\subset A$.  
Let $f_0 \: X_0 \to \Spec(A)$ be the minimal resolution of $\Spec(A)$ and $F$ be the exceptional set of $f_0$.
We call $A$ a {\em cone-like singularity} if $F$ consists of a unique smooth irreducible curve.

The most typical example of cone-like singularities is the localization of a two-dimensional normal graded ring $R = \bigoplus_{n\ge 0} R_n$ generated by $R_1$ over $R_0=k$.  
In fact, the blowing-up of $\Spec(R)$ by the graded maximal ideal is the minimal resolution with exceptional set $\proj (R)$.
\end{defn}

{\bf In the following, we assume that $A$ is a cone-like singularity.}

Let $g$ denote the genus of the curve $F$.

\begin{rem}
We can decompose $f = f_0\circ g$ with 
$g : X \to X_0$ and we denote always $E_0$ the strict transform of $F$ in $X$.  
Note that in this case, the fundamental cycle $\bbZ_X = g^*(\bbZ_{X_0})$ and for every $V \in \cB_X$, $p_a(V)\le p_a(\Z_X)=g=p_a(E_0)$; in fact, 
we  have either 
\begin{enumerate}
\item [(i)] $E_0\le V$ and $p_a(V) =g$, or 
\item [(ii)]  $\supp(V)$ is a tree of $\bbP^1$ and $p_a(V) =0$.  
\end{enumerate}
\end{rem}

Under our assumption, we have the following  vanishing theorem.
\begin{lem}\label{rohr} 
Let $D$ be a nef divisor on $X$.  If $D E_0>2g-2$, 
then $H^1(\cO_X(D))=0$.
\end{lem}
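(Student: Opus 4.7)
The plan is to deduce this from R\"ohr's Vanishing Theorem (\thmref{l:rohr}), using the structural dichotomy for cycles in $\cB_X$ recorded in the remark that precedes the statement. That theorem requires us to check $YD > 2p_a(Y)-2$ for \emph{every} $Y \in \cB_X$, so the whole proof reduces to a short case analysis on the two possibilities for $Y$.

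First I would fix an arbitrary $Y \in \cB_X$ and appeal to the remark to split into the two cases: either $E_0 \le Y$ and $p_a(Y) = g$, or $\supp(Y)$ is a tree of $\bbP^1$'s and $p_a(Y) = 0$.

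In the first case, write $Y = E_0 + Y'$ with $Y' \ge 0$ a cycle supported on $E$. Since $D$ is nef, $D E_i \ge 0$ for every irreducible component $E_i$ of $E$, hence $DY' \ge 0$. Combining this with the hypothesis $DE_0 > 2g-2$ gives
\[
DY = DE_0 + DY' \ge DE_0 > 2g - 2 = 2 p_a(Y) - 2,
\]
which is exactly the bound R\"ohr requires. In the second case, nefness of $D$ together with $Y \ge 0$ yields $DY \ge 0 > -2 = 2 p_a(Y) - 2$, so the required inequality is automatic.

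Since both cases are handled, R\"ohr's theorem applies and gives $H^1(\cO_X(D)) = 0$. There is essentially no obstacle: the entire content of the lemma is that the cone-like hypothesis forces the cycles in $\cB_X$ to be either ``trivial'' from the point of view of arithmetic genus (case (ii)) or to dominate $E_0$ (case (i)), so that a single intersection condition on $E_0$ controls all the R\"ohr inequalities at once. The only thing to be careful about is that $Y'$ in case (i) is genuinely an effective cycle on the exceptional set so that nefness of $D$ can be applied component by component — and this is immediate from $Y, E_0$ both being effective with $E_0 \le Y$.
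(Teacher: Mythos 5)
Your proposal is correct and follows essentially the same route as the paper: apply R\"ohr's theorem after splitting cycles $Y\in\cB_X$ into the two cases from the preceding remark, using nefness of $D$ to get $DY\ge DE_0>2g-2$ when $E_0\le Y$ and $DY\ge 0>-2$ otherwise. Your only addition is spelling out the decomposition $Y=E_0+Y'$, which the paper leaves implicit.
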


\begin{proof}
By Theorem \ref{l:rohr}, 
$H^1(\cO_X(D))=0$ if $D V> 2p_a(V)-2$ for 
any positive cycle $V\in \cB_X$. 
If $E_0\le V$, then $p_a(V) = g$ and $DV \ge DE_0 > 2g-2$ and 
otherwise $DV \ge 0 > 2 p_a(V) -2 = -2$ and we have our conclusion.
\end{proof}

The following proposition plays an important role for our main theorem.

\begin{prop}\label{ZE=0}
Let $I=I_Z$ be an $\m$-primary integrally closed ideal of $A$ 
represented by a cycle $Z$ on a resolution $X$ 
of $\Spec(A)$ and assume that $ZE_0=0$. 
Let $B$ be the maximal reduced connected cycle containing $E_0$ such that 
$B\le Z^{\bot}$. 
Let $Z_B=\bbZ_B$ and $C_B$ be the cohomological cycle on $B$.
Then $-Z_BE_0\ge -\bbZ_X^2=-F^2$, and for every integer $s> (2g-2)/(-Z_BE_0)$ we have 
\[
H^1(\cO_X(-s(Z+Z_B)))=0, \quad 
H^1(\cO_X(-sZ))\cong H^1(\cO_{sZ_B}(-sZ))\cong H^1(\cO_{C_B}).
\]
\end{prop}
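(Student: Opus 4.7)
The plan is to handle three assertions in turn: the inequality $-Z_B \cdot E_0 \ge -F^2$, the vanishing $H^1(\cO_X(-s(Z+Z_B))) = 0$, and the two isomorphisms.

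For the inequality, I would compare $Z_B$ with the restriction $\bbZ_X|_B$ of the fundamental cycle $\bbZ_X = g^*F$ to the components of $B$. Since $\bbZ_X - \bbZ_X|_B$ is effective and supported outside $B$, its intersection with any $E_i \subset B$ is non-negative; subtracting from $\bbZ_X \cdot E_i \le 0$ shows that $\bbZ_X|_B$ is anti-nef on $B$, whence $Z_B \le \bbZ_X|_B$ by minimality of the fundamental cycle. Because $\bbZ_X$ contains $E_0$ with coefficient $1$, so does $Z_B$, and the effective difference $\bbZ_X|_B - Z_B$ is supported away from $E_0$, forcing $(\bbZ_X|_B - Z_B) \cdot E_0 \ge 0$. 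Combined with the projection formula $\bbZ_X \cdot E_0 = g^*F \cdot E_0 = F^2$, this gives $Z_B \cdot E_0 \le F^2$.

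For the vanishing I would apply \lemref{rohr} to $D = -s(Z+Z_B)$. The bound $D \cdot E_0 = -sZ_B \cdot E_0 > 2g - 2$ is immediate from $Z \cdot E_0 = 0$ and the hypothesis on $s$. The nef condition on $D$ splits into cases: on $E_i \subset B$, $(Z+Z_B) \cdot E_i = Z_B \cdot E_i \le 0$; on $E_i \not\subset B$ disjoint from $B$, $(Z+Z_B) \cdot E_i = Z \cdot E_i \le 0$; and on $E_i \not\subset B$ adjacent to $B$, the maximality of $B$ gives $Z \cdot E_i \le -1$, while the tree structure of the cone-like resolution identifies a unique $E_k \subset B$ meeting $E_i$ transversely, so $Z_B \cdot E_i$ equals the coefficient of $E_k$ in $Z_B$. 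The remaining coefficient bound, which I expect to be the main technical obstacle, should follow from an analysis of how $Z_B$ and $Z$ interact through the blow-up tower $X \to X_0$ in the cone-like setting.

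The first isomorphism then follows from the short exact sequence
\[
0 \to \cO_X(-s(Z+Z_B)) \to \cO_X(-sZ) \to \cO_{sZ_B}(-sZ) \to 0,
\]
together with the vanishing just established and with $H^2(X, -) = 0$, since $f \colon X \to \Spec A$ has one-dimensional fibres over an affine base. For the second isomorphism, $\cO_X(-sZ)$ restricts to a degree-$0$ line bundle on every component of $B$, so Riemann--Roch on $sZ_B$ yields $\chi(\cO_{sZ_B}(-sZ)) = \chi(\cO_{sZ_B})$. Using the surjections $\cO_{nZ_B} \twoheadrightarrow \cO_{C_B}$ and the stabilization $h^1(\cO_{nZ_B}) = h^1(\cO_{C_B}) = g$ once $nZ_B \ge C_B$, I would deduce $H^1(\cO_{sZ_B}(-sZ)) \cong H^1(\cO_{C_B})$ for the $s$ in our range. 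Matching $h^1$ individually across the twist --- the other subtle point --- should be handled by the same degree-$0$ argument applied to the filtration $\{\cO_{nZ_B}(-sZ)\}_n$.
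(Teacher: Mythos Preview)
Your argument for the inequality $-Z_BE_0\ge -F^2$ is a correct variant of the paper's (the paper simply notes $Z_B\in\cB_X$ so that $\bbZ_X-Z_B$ is effective and avoids $E_0$), and the first isomorphism via the short exact sequence is fine. There are, however, two genuine gaps.

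\textbf{The nef check.} For $E_i\not\le B$ with $BE_i>0$ you reduce to bounding the coefficient of the adjacent $E_k\subset B$ in $Z_B$, but you never bound it; ``an analysis of how $Z_B$ and $Z$ interact through the blow-up tower'' is not a proof, and chasing coefficients through successive blow-ups is both unnecessary and awkward. The paper's trick is a one-line arithmetic-genus computation: since $Z_B\in\cB_X$ and $Z_BE_i>0$, also $Z_B+E_i\in\cB_X$; both cycles contain $E_0$, so $p_a(Z_B)=p_a(Z_B+E_i)=g$, while $p_a(E_i)=0$. The formula $p_a(Z_B+E_i)=p_a(Z_B)+p_a(E_i)+Z_BE_i-1$ then forces $Z_BE_i=1$, and together with $ZE_i\le -1$ (maximality of $B$) this gives $(Z+Z_B)E_i\le 0$.

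\textbf{The second isomorphism.} Knowing that $\cO_X(-sZ)$ has degree $0$ on each component of $B$ only yields $\chi(\cO_{sZ_B}(-sZ))=\chi(\cO_{sZ_B})$, not an $h^1$ equality: a nontrivial degree-$0$ line bundle on the genus-$g$ curve $E_0$ has $h^1=0$, not $g$, so your filtration argument cannot work without further input. (Your claim $h^1(\cO_{C_B})=g$ is also unjustified.) The key point you are missing is that $\cO_{sZ_B}(-sZ)$ is \emph{actually trivial}, not just numerically trivial. Indeed, since $I_Z\cO_X=\cO_X(-Z)$, a general $h\in I_Z$ satisfies $\di_X(h)=Z+H$ with $H$ non-exceptional; for $E_i\le B$ one has $HE_i=-ZE_i=0$, so $H\cap B=\emptyset$ and hence $\cO_{sZ_B}(-sZ)\cong\cO_{sZ_B}(sH)\cong\cO_{sZ_B}$. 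After that, combining the monotonicity $h^1(\cO_{sZ_B})\le h^1(\cO_{s'Z_B})$ for $s\le s'$ with the already-established decrease $q(sI)\ge q(s'I)$ shows $h^1(\cO_{sZ_B})$ stabilises in the range $s>(2g-2)/(-Z_BE_0)$, hence equals $h^1(\cO_{C_B})$.
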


\begin{proof}
We note that $Z_B\in \cB_X$ 
and $\bbZ_X - Z_B$ does not contain $E_0$. 
It is known that $\bbZ_X=g^*\bbZ_{X_0}$. 
Then the first assertion follows from that $Z_BE_0\le \bbZ_X E_0 =(g^{*}(F))^2=F^2$.
Next we show that $-(Z+Z_B)$ is nef.
If $E_i\le B$, then $(Z+Z_B)E_i=Z_BE_i\le 0$.
If $E_i \cap B=\emptyset$, then $(Z+Z_B)E_i=ZE_i\le 0$.
Assume that $E_i\le E-B$ and $BE_i>0$. Then $Z_B+E_i$ appears in a computation sequence since $Z_BE_i>0$, and 
\[
g=p_a(Z_B+E_i)=p_a(Z_B)+p_a(E_i)+Z_BE_i-1=g+Z_BE_i-1.
\]
Thus $Z_BE_i =1$. Since $ZE_i<0$ by the definition of $B$, we have that $(Z+Z_B)E_i\le 0$. 
Hence we obtain that $Z+Z_B$ is anti-nef.
Since $-s(Z+Z_B)E_0=-sZ_BE_0>2g-2$, the vanishing follows from Lemma \ref{rohr}.
From the exact sequence
\[
0 \to \cO_X(-s(Z+Z_B)) \to \cO_X(-sZ) \to \cO_{sZ_B}(-sZ) \to 0,
\]
we have $H^1(\cO_X(-sZ))\cong H^1(\cO_{sZ_B}(-sZ))$.
Since there exists a function $h\in I_Z$ such that $\di_X(h)=Z+H$, where $H$ is the proper transform of $\di_{\spec(A)}(h)$, 
we have  $\cO_{sZ_B}(-sZ)\cong \cO_{sZ_B}(-s\di_{X}(h))\cong \cO_{sZ_B}$ since $HB=0$.
If $s'>s$, then $h^1(\cO_{sZ_B})\le h^1(\cO_{s'Z_B})$ and $h^1(\cO_X(-sZ))\ge h^1(\cO_X(-s'Z))$ by  \proref{q(nI)} (2).
Therefore, $h^1(\cO_X( -sZ))$ is stable for $s > (2g-2)/(-Z_BE_0)$.
Thus we have  $h^1(\cO_X(-sZ)) = h^1(\cO_{sZ_B}) =  h^1(\cO_{C_B})$ by \cite[3.4]{OWY2}. 
\end{proof}

Before stating our main theorem, we prepare some notations and terminologies.

\begin{defn} \label{gon}
Let $C$ be a smooth curve. The {\em gonality} of the curve 
$C$ is the minimum of the degree of surjective morphisms 
from $C$ to $\PP^1$, and denoted by $\gon(C)$.
\end{defn}


Let $\fl{x}$ denote the floor (or, integer part) of a real number $x$.

\begin{thm}\label{main}  
Let $A$ be a cone-like singularity 
and let $I = I_Z$ be an $\m$-primary integrally closed ideal of $A$ represented by a cycle $Z$ on the resolution $X$.  
Let $E_0$ be the unique curve on $X$ with genus $g >0$ and 
let $d=-\bbZ_X^2=-F^2$.
Then we have the following.
\begin{enumerate}
\item If  $ZE_0=0$, then   
$\br(I)\le \fl{(2g-2)/d}+2$.
\item If $ZE_0<0$, then 
$\br(I)\le \fl{(2g-2)/\gon(E_0)}+2$.
\end{enumerate}
\end{thm}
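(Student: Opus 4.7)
The plan is to invoke the characterization $\br(I) = \min\{n \ge 0 : q((n-1)I) = q(nI)\}$ from \proref{q(nI)}(4). Since $q(sI) = h^1(\cO_X(-sZ))$ is a non-increasing sequence in $s$ (by \proref{q(nI)}(2)), it suffices to show that $q(sI)$ has already stabilized by $s = [[(2g-2)/d]]$ in Case (1), respectively by $s = [[(2g-2)/\gon(E_0)]]$ in Case (2); the stated bounds on $\br(I)$ then follow with a ``$+1$'' shift.

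Case (1) is essentially immediate from \proref{ZE=0}: that proposition gives $q(sI) = h^1(\cO_{C_B})$, a constant independent of $s$, for every $s > (2g-2)/(-Z_BE_0)$. It also records the crucial inequality $-Z_BE_0 \ge -F^2 = d$, so $(2g-2)/(-Z_BE_0) \le (2g-2)/d$. Hence $q(sI)$ is constant for all $s \ge [[(2g-2)/d]]$, and we conclude $\br(I) \le [[(2g-2)/d]] + 1$.

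For Case (2) the strategy has two steps. First, apply R\"ohr's vanishing (\lemref{rohr}) to the nef divisor $-nZ$: since $-nZ \cdot E_0 = n(-ZE_0)$, this gives $H^1(\cO_X(-nZ)) = 0$, i.e.\ $q(nI) = 0$, whenever $n > (2g-2)/(-ZE_0)$. Second, establish the gonality inequality $\gon(E_0) \le -ZE_0$. For this, take a minimal reduction $Q = (h_1, h_2) \subset I$. Since $I$ is integrally closed and $Q$ is a reduction, the identity $I^{n+1}\cO_X = Q \cdot I^n\cO_X$ (valid for $n \gg 0$), combined with invertibility of $\cO_X(-nZ)$, forces $Q\cO_X = I\cO_X = \cO_X(-Z)$. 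Thus $h_1, h_2$ globally generate $\cO_X(-Z)$, have no common zero on $X$, and define a morphism $\phi : X \to \PP^1$ with $\phi^*\cO_{\PP^1}(1) = \cO_X(-Z)$. Restricting to $E_0$, the line bundle $\cO_X(-Z)|_{E_0}$ has positive degree $-ZE_0$, so $\phi|_{E_0}$ is non-constant, hence surjective of degree exactly $-ZE_0$. This proves $\gon(E_0) \le -ZE_0$, whence $[[(2g-2)/\gon(E_0)]] \ge [[(2g-2)/(-ZE_0)]]$, so $q(nI) = 0$ for all $n \ge [[(2g-2)/\gon(E_0)]]$, giving $\br(I) \le [[(2g-2)/\gon(E_0)]] + 1$.

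The main obstacle in the plan is the gonality inequality in Case (2). Justifying $Q\cO_X = I\cO_X$ uses in an essential way both that $I$ is integrally closed (so that $\cO_X(-Z)$ is invertible) and the reduction property (to cancel the invertible factor in $I^{n+1}\cO_X = Q \cdot I^n\cO_X$). Once this is in hand, the degree of $\phi|_{E_0}$ equals $\deg \cO_X(-Z)|_{E_0} = -ZE_0$ by a standard argument on morphisms to $\PP^1$. Case (1), by contrast, is little more than a packaging of \proref{ZE=0}.
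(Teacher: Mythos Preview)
Your proof is correct and follows essentially the same route as the paper. In Case~(1) both you and the paper simply unpack \proref{ZE=0}; in Case~(2) both arguments establish $\gon(E_0)\le -ZE_0$ and then apply \lemref{rohr}. The only cosmetic difference is that the paper obtains the two sections generating $\cO_{E_0}(-Z)$ directly from the hypothesis that $\cO_X(-Z)$ is globally generated (this is built into ``$I$ is represented by $Z$ on $X$''), whereas you route through a minimal reduction $Q=(h_1,h_2)$ and the cancellation $Q\cO_X=\cO_X(-Z)$; both produce a degree $-ZE_0$ map $E_0\to\PP^1$.
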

\begin{proof}
(1) By Proposition \ref{ZE=0}, 
$q( sI)= q((s+1)I )$ for $s > (2g-2)/d$. 
Hence we have 
$\br(I)\le \fl{(2g-2)/d}+2$.\par

(2) Let $s=-ZE_0$. Since $\cO_X(-Z)$ is generated, 
there exist sections $\sigma_1, \sigma_2\in H^0(\cO_{E_0}(-Z))$ which determine a surjective morphism $\phi\: E_0\to \PP^1$ of degree $s$.
Hence $s\ge \gon(E_0)$, and 
$d':=\fl{(2g-2)/\gon(E_0)}+1>(2g-2)/s$. 
By \lemref{l:rohr}, we have $q(d'I)=0$. 
\end{proof}

Now we will give an example of $A$ and $I$ with
\[ \nr(I) = 1\quad \text{ and} \quad \br(I) = \br(A) = p_g(A) +1.\]  

\begin{ex}\label{nr<br}
Let $C$ be a hyperelliptic curve with $g(C)=g\ge 2$
 and $D_0$ a divisor on $C$ which is the pull-back of a point via the double cover $C\to \PP^1$. 
Let $b\in \Z_{>0}$, $D=bD_0$, and $R=R(C,D)= 
\bigoplus_{n \ge 0} H^0(X, \cO_C(nD))$. 
We write as $h^i(D)=h^i(\cO_C(D))$. 
Recall that
\begin{itemize}
\item $h^0(nD_0) = n+1$ and $h^1(nD_0) = g-n$ if $n\le g-1$, and 
\item $h^0(nD_0)=2n+1-g$ and $h^1(nD_0) =0$ if $n\ge g$.
\end{itemize} 
Hence    
$p_g(R) = \sum_{0\le bn \le g-1} (g-bn)$   by \cite[Theorem 5.7]{Pi}.  

Let $Y\to \spec R$ denote the minimal resolution with exceptional set $F\cong C$; we may regard $F=C$.
Then $\cO_F(-F)\cong \cO_C(D)$ and $-F^2=2b$. If we take  a general  element $h\in I_F$, then $\di_Y(h)=F+H$, 
where $H$ is the non-exceptional part and $F\cap H$ consists of distinct $2b$ points $P_1, \dots, P_{2b}$.  
Assume that $P_1+P_2\sim D_0$, and let $X \to Y$ be the blowing-up with center $\{P_3, \dots, P_{2b}\}$ ($X=Y$ if $b=1$) and $Z$ the exceptional part of $\di_X(h)$.
Then $\cO_X(-Z)$ is generated since a general element of $R_2$ has no zero on $H$. 
We have $-ZE_0=2$ and $-Z^2 = 4b -2$.

Since  $\cO_X(-(g-1)Z)\otimes_{\cO_X} \cO_{E_0} \cong \cO_{E_0}(K_{E_0})$, we have $h^1(\cO_X(-(g-1)Z))\ge h^1(K_{E_0})=1$ 
and $H^1(\cO_X(-gZ))=0$ by \lemref{l:rohr}.
Hence $\br(I_Z)=g+1=\fl{(2g-2)/\gon(E_0)}+2$.

Now, let us assume $b\ge g$.  Then we have $p_g(A) = g$.  
Since $\br(I_Z)>1$, $I_Z$ is not a $p_g$-ideal, and thus 
 $q(I_Z) \le p_g(A) - 1 = g-1$. 
From $q((g-1)I_Z) = 1$ and $q(gI_Z) =0$, we must have $q(nI_Z) = g -n$ for $n\le g$ by 
Proposition \ref{q(nI)} (2).  Hence we have $\nr(I_Z) =1$ by Proposition \ref{q(nI)} (4). 
\end{ex}


The following is a ring-theoretic expression of an example similar to Example \ref{nr<br} with $b=g$, which was found in our attempt to translate Example \ref{nr<br} into ring-theoretic language.

\begin{ex}\label{Vero}  Let $g$ be a positive integer $\ge 2$ and put
\[
R =  k[X,Y,Z]/ ( X^2 + Y^{2g+2} + Z^{2g+2}).
\]
Assume that $\chara k$ does not divide $2g+2$.
Then $R$ is a normal graded ring with 
$(\deg X, \deg Y, \deg Z) = (g+1, 1, 1)$.
Let $A$ be the $g$-{th} Veronese subring of $R$:
\[
A = R^{(g)} = k[y^g, y^{g-1}z, \ldots, z^g, xy^{g-1}, xy^{g-2}z, \ldots xz^{g-1}],
\]
where $x,y,z$ denotes, respectively, the image of $X,Y,Z$ in $R$.
Note that $C:=\proj R$ is a hyperelliptic curve with $g(C)=g$ and $R=R(C,D_0)$ with $D_0$ as in Example \ref{nr<br},  and we have $A=R(C,gD_0)$. 
Since $g > a(R) = g-1$, we have $a(A) =0$ and $p_g(A) = g$.
We put 
\[
I = ( y^g, y^{g-1}z, A_{\ge 2})\subset A,
\]
 the ideal generated by 
$y^g, y^{g-1}z\in A_1$ and all the elements of $A_i$ with $i\ge 2$,
 and 
\[Q = (y^g- z^{2g}, y^{g-1}z)\subset A.\] 
Then we can show the following.
\begin{enumerate}
\item $I$ is integrally closed and $Q$ is a minimal reduction of $I$; in fact, $I^2=QI$.
\item $\ell_A(A/I) = g$ and $e(I) = 4g -2$.
\item $\overline{I^{n+1}} = Q\overline{I^{n}}$ for $n\ge 1$ and $n \ne g$.
It follows that $\overline{I^{n}}=I^n$ for $n \le g$ by (1).
\item $xy^{g^2-1} \in  \overline{I^{g+1}}$ and $\not\in   Q\overline{I^{g}}$.
\item $q(nI) = g-n$ for $n\le g$ and $q(nI) = 0$ for $n\ge g$. 
In particular, $\nr(I) =1$ and $\br(I) = g+1$.
Since we know $\br(A) \le p_g(A) +1$, this shows that $\br(A) = g+1$.
Since $\ell_A(\overline{I^{g+1}}/Q \overline{I^{g}})=1$ by \proref{q(nI)} (3), it also follows that 
$\overline{I^{g+1}}=I^{g+1}+(xy^{g^2-1})$.
\end{enumerate}

It is easy to see that $I$ is integrally closed and that $\dim_k A/I= g$.  

If we put $Q_0 =  (Y^g- Z^{2g}, Y^{g-1}Z)\subset A_0:= k[Y,Z]^{(g)}$, then we see 
that $(Y,Z)^{3g} \cap A_0 \subset Q_0$ 
and we can take 
\[\{1, y^g=z^{2g}, y^{g-2}z^{2}, \ldots, z^g, z^{g+2}y^{g-2}, \ldots , z^{2g-1}y\}\]
as a basis of  $A_0/Q_0$ and hence $\dim_k  A_0/ Q_0= 2g-1$ , which implies
\[\ell_A( A /Q) = 4g-2.\]

Then we will show that $I^2 = QI$.  
Note that $I^2$ is generated by 
\[
\{y^{2g}, 
y^{2g-1}z, y^{2g-2}z^2\}, \ \ (y^g, y^{g-1}z)A_2, \ \ \text{and } A_4.
\]
   We have seen 
$A_4\subset Q A_{\ge 2}\subset QI$ and if $h \in A_2$, then 
$y^g h = (y^g -z^{2g}) h + z^{2g}h \in QI$ since $ z^{2g}h \in A_4 \subset QI$.
Hence 
$I^2=QI$.

Since 
\[
( xy^{g^2-1})^2= (y^{2g+2}+ z^{2g+2})(y^{2g^2-2}) = (y^g)^{2g+2}+(y^{g-1}z)^{2g+2}  \in I^{2g+2},
\]
we see that $xy^{g^2-1}\in \overline{I^{g+1}}$.  
But we can also see that $xy^{g^2-1}\not\in Q \overline{I^g}$ as follows.

First, we prove

\begin{clm}\label{barI^n}
  For every $n\ge 1$, we have the following:
 \begin{enumerate}
 \item[(a)] If $f_0=f_0(y,z)$ is a homogeneous polynomial of degree $ng$ of $y,z$, 
 then $f_0 \in \overline{I^n}$ if and only if $f_0\in I^n$.
 \item[(b)] Let $f_1=f_1(y,z)$ be a  homogeneous polynomial of degree $(n-1)g -1$ of $y,z$.
  If $xf_1\in \overline{ I^n}$, then  $n\ge g+1$  and the highest power of $z$ appearing in $f_1$ is at most $n - (g+1)$.
Therefore, $xf_1\not\in \overline{ I^n}$ if $n\le g$.
\item[(c)] If $n\le g$, then $\overline{I^n}\cap A_n= I^n\cap A_n$.
\end{enumerate}
\end{clm}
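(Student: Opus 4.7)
The plan is to prove all three parts by a single valuative criterion, using a divisorial valuation $v_E$ on the fraction field of $A$ coming from the deepest exceptional in a resolution of the base locus of $I$ over one of the two points $P\in\supp(D_0)\subset F$.

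First I would analyze $I\mathcal{O}_Y$ on the minimal resolution $Y\to\Spec A$, whose exceptional curve $F$ is identified with $C$ (so $\mathcal{O}_Y(-F)|_F\cong L:=\mathcal{O}_C(gD_0)$). Since $y^g, y^{g-1}z\in A_1 = H^0(L)$ share the common divisor of zeros $(g-1)D_0$, while $A_{\geq 2}\subset I$ is base-point free on $C$, the ideal $I\mathcal{O}_Y$ factors as $\mathcal{O}_Y(-F)\cdot\mathcal{J}$ with $\mathcal{J}$ supported on $\{P,P'\}$. Choosing local coordinates $(u,t)$ at $P$ with $F=\{t=0\}$ and $u = y/z$ (using $z^g$ as a local frame for $L$ near $P$), one finds $y^g = u^gt$, $y^{g-1}z = u^{g-1}t$, $z^{2g} = t^2$ modulo units, so that $\mathcal{J}_P = (u^{g-1}, t)$.

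Next I would form $\pi\:X\to Y$ by $g-1$ successive blowups over $P$, each centered at the intersection of the strict transform of $F$ with the newest exceptional, and let $E$ denote the final exceptional divisor. An inductive local computation along this chain yields $v_E(I) = 2(g-1)$ (since $E$ appears with multiplicity $g-1$ in each of $\pi^*F$ and $\pi^*\mathcal{J}$) and, for any homogeneous $h\in A_n$,
\[
v_E(h) = \mu_P(h) + (g-1)n,
\]
where $\mu_P(h)$ denotes the vanishing order of $h$ at $P$ as a section of $L^n$. The crucial inputs are $\mu_P(y^az^b) = a$ and $\mu_P(x) = 0$, the latter because $x(P)^2 = -z(P)^{2g+2}\ne 0$, so $x(P)\ne 0$.

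The valuative criterion $h\in\overline{I^n}\Rightarrow v_E(h)\geq nv_E(I) = 2n(g-1)$ then becomes $\mu_P(h)\geq n(g-1)$, which handles all three parts. For (1), $f_0=\sum c_jy^{ng-j}z^j$ has $\mu_P(f_0)=\min\{ng-j:c_j\ne 0\}$, so the bound forces $c_j=0$ for $j>n$, placing $f_0$ in $I^n\cap k[y,z]_{ng} = k\{y^{ng-j}z^j:0\leq j\leq n\}$. For (2), with $f_1=\sum d_jy^{(n-1)g-1-j}z^j$, the identity $\mu_P(xf_1)=\mu_P(f_1)$ and the bound give $j\leq n-g-1$ whenever $d_j\ne 0$, forcing $n\geq g+1$ and the asserted $z$-degree bound on $f_1$. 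For (3), the involution $\iota\:x\mapsto -x$ of $R$ (which restricts to $A$ and preserves $I$, hence $\overline{I^n}$) splits any $f_0+xf_1\in\overline{I^n}\cap A_n$ into eigencomponents $f_0, xf_1\in\overline{I^n}$; combining (1), (2) with the decomposition $A_n = k[y,z]_{ng}\oplus x\cdot k[y,z]_{(n-1)g-1}$ gives $\overline{I^n}\cap A_n = I^n\cap A_n$ for $n\leq g$. The main technical hurdle will be verifying the two multiplicity counts for $E$ and confirming that the deepest exceptional is needed, since the $k$-th exceptional in the chain gives only the weaker bound $j\leq n(g-k)$, sharp only at $k=g-1$.
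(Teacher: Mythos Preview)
Your argument is correct and takes a genuinely different route from the paper. The paper argues purely algebraically: for (1) it projects an integral equation onto the $k[y,z]$-summand of $A$ and invokes integral closedness of the corresponding monomial ideal in $k[y,z]^{(g)}$; for (2) it squares $xf_1$, uses $x^2=-(y^{2g+2}+z^{2g+2})$, and applies (1) to bound the $z$-exponent; and (3) is handled by a somewhat delicate $z$-degree contradiction using the integral equation directly. Your proof replaces all of this by a single divisorial valuation $v_E$, computed from the $(g-1)$-st infinitely near point along $F$ at $P$, together with the formula $v_E(h)=\mu_P(h)+(g-1)n$ for $h\in A_n$ and $v_E(I)=2(g-1)$; the three statements then drop out of the inequality $\mu_P(h)\ge n(g-1)$. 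Your treatment of (3) is also sharper than the paper's: rather than multiplying $f_0+xf_1$ and $f_0-xf_1$ and analyzing $z$-degrees, you simply average under the involution to get $f_0,\,xf_1\in\overline{I^n}$ separately and then apply (1) and (2). The trade-off is that the paper's approach requires no resolution-theoretic input beyond what is already in play, while yours needs the local model of $Y$ as the total space of $L^{-1}$ and a careful tracking of $v_E(u)$ and $v_E(t)$ through the blowup chain; once those are secured your argument is cleaner and more conceptual.
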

\begin{proof}
(a) 
If $f_0$ is integral over $I^n$, then there is an integral equation
\[f_0^s + c_1 f_0^{s-1} + \cdots + c_j f_0^{s-j} + \cdots + c_s = 0\]
with  $c_j \in I^{nj}\cap A_{nj}$.  Now, we can write 
\[ c_j = c_{j,0} + x c_{j,1}, \] 
where $c_{j,0}$ (resp.  $c_{j,1}$) is a homogeneous polynomial of degree $njg$ (resp. $(nj-1)g -1$)
of $y,z$.  Since $A_n = k[y,z]_{ng} \oplus x k[y,z]_{(n-1)g -1}$ as $k[y,z]^{(g)}$-module,
we have 
\[f_0^s + c_{1,0} f_0^{s-1} + \cdots + c_{j,0} f_0^{s-j} + \cdots + c_{s,0} = 0\]
and we have our result since for the ideal $I_0 :=(y^g, y^{g-1}z)+(k[y,z]^{(g)})_{\ge 2}$ in $k[y,z]^{(g)}$,  
$I_0^n$ is integrally closed in $k[y,z]^{(g)}$.
\par
(b)
Suppose that $xf_1$ is integral over $I^n$. Then 
\[(xf_1)^2 = f_1^2(y^{2g+2}+ z^{2g+2})\]
should be integral over $I^{2n}$ and included in $I^{2n}$ by (a).
Hence the highest power of $z$ appearing in 
$f_1^2(y^{2g+2}+ z^{2g+2})$ is at most $2n$,
 and then $2g+2\le 2n$ and the highest power of $z$ appearing in $f_1^2$ is at most $2(n-g-1)$.

(c) 
 Let $f_0$ (resp. $f_1$) be a homogeneous polynomial of degree $ng$ (resp. $(n-1)g-1$) in 
$y,z$.  We assume that $f_0 + xf_1 \in \overline{I^n}\cap A_n$ with $n\le g$.
By (a), it suffices to show if $f_0 + xf_1\in \overline{I^n}\cap A_n$, then $f_1=0$.
Since $y^{ng},\ldots , y^{ng-n}z^n \in I^n$, we may assume that $f_0 = z^{n+1}\phi$ for some $\phi \in k[y,z]$.
\par 
Let $\sigma : A \to A$ be the automorphism with $\sigma(x)=-x$ and fix $y$ and $z$. 
Then since $I$ is stable 
under $\sigma$, if $f_0 + xf_1 \in \overline{I^n}$, then $f_0- xf_1 \in \overline{I^n}$.  
Thus we should have 
\[f_0^2 - f_1^2 (y^{2g+2} + z^{2g+2}) \in \overline{I^{2n}}. \] 

Now write   
\[f_0 = \sum_{i= n+1}^{ng} a_{i}y^{ng -i}z^i \quad {\mbox and} \quad f_1= \sum_{j=0}^{(n-1)g -1} b_jy^{(n-1)g -1-j}z^j.\]  
Since the highest power of $z$  appearing in $f_0^2 - f_1^2 (y^{2g+2} + z^{2g+2})$ should be at most $2n$ by (a), if $u \ge 0$ is the biggest such that $b_u\ne 0$, then $a_{u+g+1}$ should be the biggest such that $a_{u+g+1}\ne 0$ and 
$a_{u+g+1}^2 = b_u^2$.\par
 
Let $f = f_0+xf_1$ and let
\[ f^s + f^{s-1} h_1+ \cdots + h_s = 0 \]
be an integral equation of $f$ over $I^n$ so that $h_j \in I^{nj}$ for every $j$. 
Now we will deduce a contradiction. 

The highest power of $z$ in $f^s$ is $s(u + g +1)$. But for $j>0$, since  
the highest power of $z$ in $h_j$ is at most $nj$, we conclude every power of $f^{s-j}h_j$ is 
strictly less than   $s(u + g +1)$.  Thus we have a contradiction. 
\end{proof}
  
Now, let us return to the proof of $xy^{g^2-1}\not\in Q \overline{I^g}$.
We can write 
\[xy^{g^2-1}= xy^{g^2-g-1}(y^g-z^{2g}) + (xy^{g^2-2g}z^{2g-1})(y^{g-1}z).\]
Since $Q$ is generated by regular sequence, any expression of  $xy^{g^2-1}$ as an element of $Q$
should be of the form 
\[xy^{g^2-1}= (xy^{g^2-g-1} + h y^{g-1}z )(y^g-z^{2g}) + (xy^{g^2-2g}z^{2g-1} - (y^g-z^{2g})h)(y^{g-1}z)\]
for some $h\in A$.

Assume $xy^{g^2-g-1} + h y^{g-1}z$ and $xy^{g^2-2g}z^{2g-1} - (y^g-z^{2g})h
\in \overline{I^g}$.  Since $\overline{I^g}$ is a homogeneous ideal, we may assume that $h \in A_{g-1}$.  
By Claim \ref{barI^n} (c), we have $xy^{g^2-g-1} + h y^{g-1}z$ and $y^gh$ are elements of $I^g$.
Therefore, we have $xy^{g^2-g-1}\in I^g$; however, it follows that $xy^{g^2-g-1}\not\in \overline{I^g}$ by Claim \ref{barI^n} (b).
Thus our proof that $xy^{g^2-1}\not\in Q \overline{I^g}$ is completed. 

\par 
In order to see (3) and (5), we suppose $q(kI)=q((k+1)I)$ for some $k \le g-1$. 
Then $q((g-1)I)=q(gI)=q((g+1)I)$ by \cite[Lemma 3.1]{OWY2}. 
This implies $\overline{I^{g+1}}=Q\overline{I^g}$, which contradicts that 
$xy^{g^2-1} \in \overline{I^{g+1}} \setminus Q\overline{I^{g}}$. 
Hence we have 
\[
g=p_g(A)=q(0\cdot I)> q(1 \cdot I)> \cdots > q((g-1)I)> q(gI) \ge 0.
\]
It follows that $q(nI)=g-n$ for all $n \le g$ and $q(nI)=0$ for all $n \ge g$. 
Furthermore, the other assertions follows from \cite[Lemma 2.7]{OWY4}. 
\end{ex}

\section{The homogeneous case}\label{s:hmg}

In this section we treat a two-dimensional standard normal graded ring $R = \bigoplus_{n\ge 0} R_n$ over an algebraically closed field $R_0=k$.
We can also express $R$ as 
\[ R = R(C,D) = \bigoplus _{n\ge 0} H^0( C, \cO_C( nD)), \]
where $C=\Proj(R)$ and $D$ is  
a very ample divisor on $C$ such that $\cO_C(D) \cong \cO_C(1)$.  We write $\m = R_+ = \bigoplus_{n>0} R_n$.
Assume that $f\: X\to \spec R$ is the minimal resolution.
Then $f$ is the blowing-up by the maximal ideal $\m$, $E\cong C$, and $\m$ is represented by $E$. 
We know in this case $\m^n$ is integrally closed for every $n>0$; therefore $\m^n=I_{nE}$.  

The invariant $a(R)$ of $R$ is given by (see \cite[(3.1.4)]{G-W}, \cite[\S 2]{KeiWat-D})
\[a(R) = \max\{n \;|\; H_{\m}^2(R)_n \ne 0\} =  \max\{n \;|\; h^1(\cO_C(nD) \ne 0\}.\]
Since we are interested in non-rational singularities, we always assume $a(R) \ge 0$.

By \cite{Pi} and  \cite[\S 6]{tki-w}, we have
\[p_g(R) = \sum_{n=0}^{a(R)} h^1(\cO_C(nD))\quad \text{ and } \; q(k\m) = \sum_{n\ge k} h^1(\cO_C(nD)).\]

If $Q$ is a minimal reduction of $\m$ generated 
by elements of $R_1$, since $a(R/Q) = a(R) +2$ (cf. \cite[(3.1.6)]{G-W}), we have 
\begin{equation}\label{r(m)-homog}
\m^{a(R)+2} \ne Q\m^{a(R)+1} \ \ \text{ and} \ \  \nr(\m) = a(R) + 2 = \br(\m), 
\end{equation}
the latter equality holds from $q(a(R)\m) >0$ and $q((a(R)+1)\m) =0$.

We shall show that if $R$ is a hypersurface or complete intersection satisfying certain conditions, 
then we have $\br(R) =\br(\m) = \nr(\m)=\nr(R)$.

\subsection{Hypersurfaces}
Assume that $R = k[X,Y,Z]/ (f)$, where $f$ is a homogeneous polynomial of degree $d\ge 3$ with an isolated singularity.
Let $Y\to \spec R$ denote the minimal resolution with exceptional set $F$.
Then $F=\{f=0\}\subset \PP^2$, $g:=g(F)=(d-1)(d-2)/2$, and $a(R)=d-3$.
Let $D=-F|_F$. Then $\deg D=d$ and $R=R(F, D)$. 
Let $P^i(R,t)=\sum_{n\ge 0}h^i(\cO_F(nD))t^n$ and $p^{\chi}(R,t)=P^0(R,t)-P^1(R,t)$. 
Then we have
\begin{align*}
P^0(R,t) &=\frac{1-t^d}{(1-t)^3}, \\
P^{\chi}(R,t)&=\sum_{n\ge 0}\chi(\cO_F(nD))t^n
=\sum_{n\ge 0}(1-g+md)t^m \\
&=\frac{1-g}{1-t}+\frac{dt}{(1-t)^2}
=\frac{1-g+(g+d-1)t}{(1-t)^2}, \\
P^1(R,t)&=P^0(R,t)-P^{\chi}(R,t)=\sum_{m=0}^{a(R)}\binom{d-1-m}{2}t^m.
\end{align*}

The following theorem is one of the main goals of this paper. 

\begin{thm}\label{t:brHmg}
$\nr(\m)=\br(\m)=\nr(R)=\br(R)=d-1=a(R)+2$.
\end{thm}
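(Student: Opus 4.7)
From equation \eqref{r(m)-homog} we already have the equalities $\nr(\m)=\br(\m)=a(R)+2=d-1$. Since $\nr(I)\le\br(I)$ always holds, and since $\nr(R)$ and $\br(R)$ are by definition the maxima of $\nr(I)$ and $\br(I)$ over all $\m$-primary integrally closed ideals $I$ (of which $\m$ is one), the inequalities
\[
d-1=\nr(\m)\le \nr(R)\le \br(R),\qquad d-1=\br(\m)\le \br(R)
\]
are automatic. The whole theorem therefore reduces to the single upper bound: for every $\m$-primary integrally closed ideal $I=I_Z$, one must show $\br(I)\le d-1$.

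To get this bound I would feed the numerics of a smooth plane curve directly into \thmref{main}. Since $F\subset \PP^2$ is a smooth curve of degree $d$, one has $g=p_a(F)=(d-1)(d-2)/2$ and $-F^2=d$, so $2g-2=d(d-3)$. In addition I would invoke the classical theorem of M.\ Noether that the gonality of a smooth plane curve of degree $d\ge 3$ is $\gon(F)=d-1$ (upper bound from projection from a point of $F$, lower bound being Noether's theorem). Of course $E_0\cong F$, so these numbers all apply to $E_0$ on any resolution $X$.

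Now I would split on whether $Z$ meets $E_0$. If $ZE_0=0$, \thmref{main}(1) gives
\[
\br(I)\le [[(2g-2)/d]]+1=[[d-3]]+1=(d-2)+1=d-1,
\]
using $[[m]]=m+1$ for $m\in\Z$. If $ZE_0<0$, \thmref{main}(2) gives $\br(I)\le [[d(d-3)/(d-1)]]+1$, and a one-line check shows $d-3<d(d-3)/(d-1)<d-2$ for $d\ge 4$ (with the value $0$ when $d=3$), so the bracket again evaluates to $d-2$ and the bound is $d-1$. Either way $\br(I)\le d-1$, completing the missing direction.

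The only nontrivial input beyond \thmref{main} and \eqref{r(m)-homog} is the identification $\gon(F)=d-1$; every other step is elementary arithmetic. Consequently, the \emph{main obstacle} is purely bibliographic, namely, correctly citing Noether's gonality theorem for smooth plane curves (the lower bound $\gon(F)\ge d-1$ is the nontrivial half).
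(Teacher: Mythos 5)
Your proposal is correct and follows essentially the same route as the paper: reduce via \eqref{r(m)-homog} to the bound $\br(I_Z)\le d-1$, invoke the gonality $\gon(F)=d-1$ of a smooth plane curve (the paper cites Namba's theorem, with Homma's appendix for arbitrary characteristic), and evaluate the bounds of \thmref{main}. Your explicit case split between $ZE_0=0$ and $ZE_0<0$ is only a slightly more detailed version of the paper's one-line computation, so there is nothing substantive to add.
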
  
\begin{proof}  Since $a(R) = d-3$, we have seen that $\nr(\m) = \br(\m) = a(R) + 2 = d-1$ by \eqref{r(m)-homog}.  
Hence it is sufficient to show that $\br(R)\le d-1$.

By Namba's theorem\footnote{This is also called Max Noether's theorem} \cite[Theorem 2.3.1]{Namba767} (see \cite[Appendix]{Hom-fun} for any characteristic), we have $\gon(F) =d-1$.
By \thmref{main}, we have 
\[
\br(I_Z)\le \fl{(2g-2)/(d-1)}+2=\fl{d-2-2/(d-1)}+2=d-1.   \qedhere
\]
\end{proof}

\begin{rem}\label{q(nm)}
In this case we have 
\[
q(n\m)=\sum_{n\le i \le a(R)} \binom{d-1-i}{2}
=\binom{d-n}{3}.
\]
\end{rem}

\begin{ex}\label{L+mr}  We will give a series of examples with various values of $q(I_Z)$. 
\par
Let $L\in R$ be a general linear form and  let 
$\{P_1, \ldots P_d\}=\{L=f=0\}\subset \PP^2$.
We write $\di_Y(L)=F+\sum_{i=1}^dH_i$, where $F\cap H_i=\{P_i\}$.
Let $X_0=Y$ and $\phi_i\:X_i\to X_{i-1}$ be the blowing up with center the intersection of the exceptional set and the proper transform of $\sum_{i=1}^dH_i$.  Let $E^{(r)}$ denote the exceptional set  of $X_r\to \spec R$.
For every $i$, by abuse of notation, we denote by $H_i$ (resp. $E_0$) the proper transform of $H_i$ (resp. $F$), and $E_{i,j}$ the proper transform of the exceptional curve of the $j$-th blowing up at $E^{(j-1)}\cap H_i$. 
Then $E^{(r)}$ ($r\ge 1$) is star-shaped and expressed as follows:
\[ 
  E^{(r)}= E_0+\sum_{i=1}^d \sum _{j=1}^r E_{i,j}, \quad 
E_{i,1}^2=\cdots = E_{i, r-1}^2 = -2, \quad  E_{i,r}^2 = -1.
\]
We denote by $Z_r$ the exceptional part of $\di_{X_r}(L)$, namely,
\[
Z_r = E_0 + \sum_{i=1}^d ( 2E_{i,1} + 3 E_{i,2} +\dots + r E_{i, r-1}  +(r+1)  E_{i,r} ).
\]
\end{ex}

\begin{defn}
For a graded ring $S=\bigoplus_{i\ge 0}S_i$, let $S_{\ge m}=\bigoplus_{i\ge m}S_i$.
\end{defn}

\begin{prop}
Let $L_{r+1}\in R$ be a general $(r+1)$-form and $Q = (L, L_{r+1})$. 
We have the following.
\begin{enumerate}
\item We have $I_{Z_r} = (L) + \m^{r+1}$ and that $Q$ is a minimal reduction of $I_{Z_r}$. Furthermore, $I_{Z_r}^s$ is integrally closed for every $s\ge 1$.
\item If $r \ge d - 2 = a( R) + 1$, then $I_{Z_r}$ is a $p_g$-ideal, namely, $q(I_{Z_r})=p_g(R)$.
\item We have $q( I_{Z_r}) = q( I_{Z_{r-1}}) + d-r-1$ for $1\le r \le d-1$, where $I_{Z_0}=\m$.
Thus $q( I_{Z_r}) =\binom{d-1}{3}+r(2d-r-3)/2$.
\item We have
\[
\ell_R( I_{Z_r}^s / Q   I_{Z_r}^{s-1} ) =\ell_R\left((R/Q)_{\ge s(r+1)}\right).
\]
Therefore, we have that $I_{Z_r}^s  =  Q   I_{Z_r}^{s-1}$ 
if and only if $s(r+1) \ge d + r$, and that 
$\nr(I_{Z_r}) = \br(I_{Z_r}) = \ce{\frac{d+r}{r+1}}-1=\ce{\frac{d-1}{r+1}}$.
\item If $s\ge \br(I_{Z_r})-1$, we have $q ( s   I_{Z_r} ) = p_g ( R_{d,r})$, where $R_{d,r}$ is the singularity obtained by blowing down $Z_r^{\bot}$. Then $R_{d,r}$ is obtained from $R(E_0,D)$, where $D = ( 1 + 1/{ r} ) ( P_1+\ldots + P_d)$.
\end{enumerate}
\end{prop}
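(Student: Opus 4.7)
For part (1), I first track the divisor of $L$ through the iterated blowups: from $\di_Y(L) = F + \sum_i H_i$, the successive blowing up produces the exceptional part of $\di_{X_r}(L)$ equal to $Z_r$, so $L \in I_{Z_r}$. Next, $\bbZ_{X_r}$ is the reduced cycle $E_0 + \sum_{i,j} E_{i,j}$, and $(r+1)\bbZ_{X_r} \ge Z_r$ coefficientwise, so $\m^{r+1} \subseteq I_{Z_r}$; hence $(L)+\m^{r+1}\subseteq I_{Z_r}$. I verify pointwise that $((L)+\m^{r+1})\OO_{X_r} = \OO_{X_r}(-Z_r)$; the only delicate case is at $E_{i,r}\cap H_i^{\mathrm{str}}$, where $L$ vanishes along both $E_{i,r}$ and the strict transform of $H_i$, and an element of $\m^{r+1}$ supplies the generator vanishing purely along $E_{i,r}$. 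Therefore $I_{Z_r} = \overline{(L)+\m^{r+1}}$, and a length comparison using $\ell_A(A/((L)+\m^{r+1})) = \dim_k R_r$ (via the short exact sequence given by multiplication by $L$) upgrades this to equality. The claim that $Q = (L, L_{r+1})$ is a minimal reduction follows from $L, L_{r+1}$ being a regular sequence in $R$; the finiteness of the Rees extension is established with an explicit reduction number in part (4). Since $\OO_{X_r}(-sZ_r)$ is globally generated by products $L^i\cdot$(elements of $\m^{(s-i)(r+1)}$), each $I_{Z_r}^s = I_{sZ_r}$, hence integrally closed.

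For part (3), I compare $I_{Z_r}$ with $I_{Z_{r-1}}$ via the blowup $\phi_r\:X_r\to X_{r-1}$. A direct computation gives $Z_r = \phi_r^*Z_{r-1} + D$ with $D = \sum_{i=1}^d E_{i,r}$, producing
\[
0\to \OO_{X_r}(-Z_r)\to \OO_{X_r}(-\phi_r^*Z_{r-1})\to \OO_D \to 0,
\]
where the rightmost term is $\OO_D$ because $(-\phi_r^*Z_{r-1})\cdot E_{i,r}=0$. The projection formula and $R^1\phi_{r*}\OO_{X_r}=0$ identify the cohomology of the middle sheaf with that of $\OO_{X_{r-1}}(-Z_{r-1})$. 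The resulting five-term sequence gives
\[
q(I_{Z_r}) - q(I_{Z_{r-1}}) = d - \ell_A(I_{Z_{r-1}}/I_{Z_r}).
\]
By (1), $I_{Z_{r-1}}/I_{Z_r}\cong (R/(L))_r$ has dimension $H_R(r)-H_R(r-1) = r+1$ for $r\le d-1$, giving the increment $d-r-1$. Summing from $q(\m)=\binom{d-1}{3}$ (\remref{q(nm)}) yields the closed-form formula. Part (2) then follows: at $r=d-2$ the sum telescopes to $\binom{d}{3}=p_g(R)$, and for $r\ge d-1$ the non-positive increments combined with $q(I_{Z_r}) \le p_g(R)$ force $q(I_{Z_r}) = p_g(R)$ to persist.

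For part (4), expand $I_{Z_r}^s = \sum_{i=0}^s L^i\m^{(s-i)(r+1)}$. In degrees $n \ge s(r+1)$, $(I_{Z_r}^s)_n = R_n$ and $(QI_{Z_r}^{s-1})_n = Q_n$, so $(I_{Z_r}^s/QI_{Z_r}^{s-1})_n = (R/Q)_n$; in degrees $n < s(r+1)$ one checks $(I_{Z_r}^s)_n = (QI_{Z_r}^{s-1})_n$ by noting that each summand $L^i R_{n-i}$ contributing to $I_{Z_r}^s$ already belongs to $LI_{Z_r}^{s-1}\subseteq QI_{Z_r}^{s-1}$. Hence $\ell_A(I_{Z_r}^s/QI_{Z_r}^{s-1}) = \ell_A((R/Q)_{\ge s(r+1)})$. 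Since $R/Q$ is a graded complete intersection with $a$-invariant $d+r-1$, $(R/Q)_n=0$ for $n\ge d+r$, and the quotient vanishes iff $s(r+1)\ge d+r$, giving $\nr(I_{Z_r}) = \ce{(d-1)/(r+1)}$ with $\nr = \br$ by (1). For part (5), once $s\ge \br(I_{Z_r})-1$, applying \proref{ZE=0} with $B$ the connected component of $Z_r^\perp$ containing $E_0$ gives $q(sI_{Z_r}) = h^1(\OO_{C_B}) = p_g(R_{d,r})$, where $R_{d,r}$ is the singularity obtained by contracting $B$. Tracking how the star-shaped $A_{r-1}$-chains at each $P_i$ blow down shows that the minimal resolution of $R_{d,r}$ has $E_0$ as its unique exceptional curve; reading off the intersection data on $E_0$ yields $R_{d,r} = R(E_0, D)$ with $D = (1+1/r)(P_1+\cdots+P_d)$. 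The principal obstacles are the length-matching step in (1) and the explicit identification in (5) of the $\bbQ$-divisor $(1+1/r)\sum P_i$ arising from the star-shaped contraction.
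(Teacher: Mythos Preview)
Parts (3) and (4) match the paper's argument closely, and your derivation of (2) from (3) is a valid alternative to the paper's direct proof (the paper instead computes the cohomological cycle $C_r$ explicitly, checks $Z_rC_r=0$ and $\cO_{C_r}(-Z_r)\cong\cO_{C_r}$, and invokes \cite[3.10]{OWY1}). One slip: your ``non-positive increments'' should read ``non-negative''---the point is that $q\le p_g$ forces the nonnegative increments to vanish once $p_g$ is attained.

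There is, however, a genuine gap in part (1). Establishing $((L)+\m^{r+1})\cO_{X_r}=\cO_{X_r}(-Z_r)$ only gives $\overline{(L)+\m^{r+1}}=I_{Z_r}$. Your ``length comparison'' would need an \emph{independent} computation of $\ell_R(R/I_{Z_r})$, but the Riemann--Roch expression for that length involves $q(I_{Z_r})$, which you only obtain in (3) \emph{using} the identification $I_{Z_{r-1}}/I_{Z_r}\cong (R/(L))_r$ from (1); this is circular. The same problem recurs for powers: global generation of $\cO_{X_r}(-sZ_r)$ by elements of $I_{Z_r}^s$ yields only $\overline{I_{Z_r}^s}=I_{sZ_r}$, not $I_{Z_r}^s=I_{sZ_r}$. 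The paper closes this gap with the appendix \lemref{l:app}: since $R$ is normal and $(L)$ is radical (the general line meets $\{f=0\}$ in $d$ distinct points), the ideal $(L)+R_{\ge r+1}$ and all its powers are integrally closed by a short direct argument on integral equations. Once that is known, $(L)+\m^{r+1}\subseteq I_{Z_r}=\overline{(L)+\m^{r+1}}$ is immediately an equality, and the valuation description of $I_{Z_r}$ feeds back into \lemref{l:app} to handle the powers. Finally, in (5) note that \proref{ZE=0} only applies for $s>(2g-2)/(-Z_BE_0)$; you need the stabilization of $q(sI_{Z_r})$ established in (4) to push the identification down to $s=\br(I_{Z_r})-1$, as the paper does via \cite[3.4]{OWY2}.
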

\begin{proof}
Let $S=k[X,Y,Z]/(f,L,L_{r+1})$. Then  the Hilbert series of the Artinian ring $S$ is $(1-t^d)(1-t^{r+1})/(1-t)^2$, and $\ell_R(R/Q)=\dim_k(S)=d(r+1)$. 

(1)
Let $I=(L, \m^{r+1})=(L) + \m^{r+1}$.
Then $I^s$ is integrally closed for every $s\ge 1$ by \lemref{l:app}. 
We have $I \subset I_{Z_r}$, and $\cO_{X_r}(-Z_r)$ is generated by the elements $L$ and $L_{r+1}$. Therefore, $Q$ is a minimal reduction of $I$ and $I=I_{Z_r}$.

(2)
On $X_0$, we have $K_{X_0}=-(d-2)F$. Hence the cohomological cycle $C_r$ on $X_r$ is given as follows (cf. \cite[Proposition 2.6]{OWY3}):
\[
C_r=(d-2)E_0+\sum_{i=1}^d \sum_{j=1}^r a_jE_{i,j}, \quad 
a_j=\max\{d-2-j,0\}.
\]
Therefore, for $r\ge d-2$, we have $Z_rC_r=0$ and 
$\cO_{C_r}(-Z_r)\cong \cO_{C_r}$.
Hence $Z_r$ is a $p_g$-cycle by 
\cite[3.10]{OWY1}.

(3)
Let us consider the exact sequence
\[
0 \to \cO_{X_r}(-Z_r) \to \cO_{X_r}(-\phi_r^*Z_{r-1}) \to \cO_{D_r} \to 0,
\]
where $D_r=\sum_{i=1}^d E_{i,r}$.
Note that $H^0(\cO_{X_r}(-\phi_r^*Z_{r-1}))=H^0(\cO_{X_{r-1}}(-Z_{r-1}))=I_{Z_{r-1}}$.
Then 
\[
\Img\left(H^0(\cO_{X_r}(-\phi_r^*Z_{r-1})) \to H^0(\cO_{D_r}) \right)
\cong (L,\m^r)/(L,\m^{r+1}) \cong k[X,Y]_r\cong k^{r+1}.
\]
Since $H^1(\cO_{D_r})=0$, from the long exact sequence, 
 we obtain $q(I_{Z_r})=q(I_{Z_{r-1}})+d-(r+1)$.

(4)
We have 
\[
I_{Z_r}^s=L(L, \m^{r+1})^{s-1} + \m^{s(r+1)}, \quad
QI_{Z_r}^{s-1}=L(L, \m^{r+1})^{s-1} + L_{r+1}\m^{(s-1)(r+1)}.
\]
Therefore, 
\[
\ell_R( I_{Z_r}^s / Q   I_{Z_r}^{s-1} ) 
=\dim_k(S_{\ge s(r+1)})
=\dim_k\left((R/Q)_{\ge s(r+1)}\right).
\]
Clearly, $I_{Z_r}^s  =  Q   I_{Z_r}^{s-1}$  if and only if 
$s(r+1) > d + r -1$.
The last assertion follows from the definition of $\nr$ and $\br$.

(5)
The first assertion follows from (4) and \cite[Proposition 3.4]{OWY2}.
Let $H=P_1+\cdots + P_d$. Then we have
$Z_r^{\bot}=E_0+\sum_{i=1}^d \sum _{j=1}^{r-1} E_{i,j}$ and $\cO_{E_0}(-E_0)\cong \cO_{E_0}(\di_{X_r}(L)-E_0)\cong \cO_{E_0}(2H)$.
Hence $D=(2H-\frac{r-1}{r}H)|_{E_0}=(\frac{r+1}{r}H)|_{E_0}$ 
(cf. \cite{Pi}, \cite[\S 6]{tki-w}).
\end{proof}

\begin{rem}
By \proref{q(nI)} (3), we have
\[
\ell_R( I_{Z_r}^s / Q   I_{Z_r}^{s-1} ) = \left( q( s I_{Z_r}) + q( (s-2) I_{Z_r})\right) - 2 q( (s-1) I_{Z_r} ).
\]
\end{rem}

\subsection{Complete intersections}
Assume that $R = k[X_0, \dots, X_n]/ (f_1, \dots, f_{n-1})$, where each $f_i$ is a homogeneous polynomial of degree $d_i>1$, and $\spec R$ has an isolated singularity at $\m$. 
Then $a(R)=\sum_{i=1}^{n-1}d_i-n-1$. Let $d=\prod_{i=1}^{n-1}d_i$.

Let $Y\to \spec R$ denote the minimal resolution with exceptional set $F$.
Then $F\cong \{f_1=\cdots =f_{n-1}=0\}\subset \PP^n$. 
By adjunction, $g(F)=d\cdot a(R)/2+1$.
By \eqref{r(m)-homog}, we have $\bar r (\m)=a(R)+2$.

\begin{thm}
Assume that $d_1\le \dots \le d_{n-1}$. Then 
\[
\br(R)\le a(R)+\fl{a(R)/(d_1-1)}+2.
\]
\end{thm}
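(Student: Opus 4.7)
The plan is to apply \thmref{main} to every $\m$-primary integrally closed ideal $I=I_Z$ of $R$ represented by a cycle $Z$ on a resolution $X\to\spec R$, and to take the maximum of the resulting bounds $\br(I_Z)$. First I would record the numerical invariants of $F$: since $F\subset\PP^n$ is a smooth complete intersection curve of degree $d=\prod_{i=1}^{n-1}d_i$ with $\cO_F(-F)\cong \cO_F(1)$, we have $-\bbZ_X^2=-F^2=d$, and adjunction gives $2g-2=d\cdot a(R)$, consistent with the stated $g(F)=d\cdot a(R)/2+1$.

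For the case $ZE_0=0$ in \thmref{main}(1), the bound
\[
\br(I_Z) \le [[(2g-2)/d]]+1 = [[a(R)]]+1 = a(R)+2
\]
is immediate, and since $a(R)/(d_1-1)\ge 0$ we have $[[a(R)/(d_1-1)]]\ge [[0]]=1$, so this is dominated by $a(R)+[[a(R)/(d_1-1)]]+1$. For the case $ZE_0<0$ in \thmref{main}(2), I would invoke a Lazarsfeld-type lower bound on the gonality of a smooth complete intersection curve in $\PP^n$, namely
\[
\gon(F) \ge (d_1-1)\,d_2\cdots d_{n-1} = (d_1-1)\,d/d_1,
\]
which is the higher-codimension generalization of the Max Noether theorem used in \thmref{t:brHmg}. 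Substituting gives
\[
\frac{2g-2}{\gon(F)} \le \frac{d\cdot a(R)}{(d_1-1)\,d/d_1} = \frac{d_1\,a(R)}{d_1-1} = a(R)+\frac{a(R)}{d_1-1},
\]
and using the elementary identity $[[n+x]]=n+[[x]]$ valid for any integer $n$, we obtain $[[(2g-2)/\gon(F)]]+1 \le a(R)+[[a(R)/(d_1-1)]]+1$. Taking the maximum over all $I_Z$ then completes the proof.

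The hard part will be the gonality lower bound for smooth complete intersection curves in $\PP^n$: for $n=2$ this is Max Noether's theorem (already invoked in \thmref{t:brHmg}), whereas for $n\ge 3$ it is a nontrivial theorem of Lazarsfeld whose sharpness governs the size of the bracket $[[a(R)/(d_1-1)]]$. Once this gonality estimate is available, everything else is a direct numerical packaging of \thmref{main} with the combinatorial data $(d_1,\ldots,d_{n-1})$ of the complete intersection, and no further geometry on the resolution is needed.
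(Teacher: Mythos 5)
Your proposal is correct and follows essentially the same route as the paper: cite Lazarsfeld's gonality bound $\gon(F)\ge (d_1-1)d_2\cdots d_{n-1}$ for the complete intersection curve $F$, feed it into \thmref{main} with $2g-2=d\cdot a(R)$, and absorb the integer part via $[[n+x]]=n+[[x]]$. The only (harmless) difference is that you spell out the $ZE_0=0$ case of \thmref{main} explicitly, which the paper leaves implicit since that bound $a(R)+2$ is dominated by the stated one.
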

\begin{proof}
By \cite[4.12]{Laz-LecLS}, $\gon(F) \ge (d_1-1)d_2\cdots d_{n-1}=d(d_1-1)/d_1$.
By Theorem \ref{main}, we have 
\[
\br(I_Z)\le \fl{d a(R) d_1/d(d_1-1)}+2=\fl{a(R) +a(R) /(d_1-1)}+2.  \qedhere
\]
\end{proof}

\begin{ex}
If $n=3$, $d_1=d_2=2$, then $\nr(\m)=\br(R)= a(R)+2$.
\end{ex}

\section{Appendix}\label{s:app}

We give a lemma showing that certain ideals of special type which appeared in \S \ref{s:hmg} are integrally closed.

Let $R = \bigoplus_{n\ge 0} R_n$ be a normal graded ring of dimension $d\ge 2$ which is finitely generated over a field $k=R_0$ and $\m= \bigoplus_{n\ge 1} R_n$.
We fix positive integers $N>m$ and a homogeneous element $f\in R_m$ such that $(f)=\sqrt{(f)}$.   
Then we prove the following.

\begin{lem} \label{l:app}
The ideal $I:=(f)+R_{\ge N}$ is integrally closed. 
Suppose that there exist valuations $v_1, \dots, v_p$ of the quotient field of $R$ such that 
\[
I=\defset{x\in R}{v_i(x)\ge v_i(f), \; 1\le i \le p}.
\]
Then $I^s$ is integrally closed for every $s\ge 1$.
\end{lem}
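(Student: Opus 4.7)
The plan is to prove Parts 1 and 2 in order, with the first serving as a warm-up for the second.

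For Part 1, since $I$ is homogeneous so is $\overline{I}$, and it suffices to show each homogeneous $x \in \overline{I}$ of degree $n$ belongs to $I$. If $n \ge N$ then $x \in R_{\ge N} \subset I$ trivially, so assume $n < N$; I aim for $x \in (f)$. Reduce modulo $(f)$: because $(f) = \sqrt{(f)}$, the ring $R/(f)$ is reduced and $(f) = \bigcap_i \frp_i$ over its (graded, height-one) minimal primes. In each graded domain $R/\frp_i$ the image $\widetilde{x}$ is homogeneous of degree $n < N$ and integral over the image of $R_{\ge N}$; writing any integral equation $\widetilde{x}^k + c_1 \widetilde{x}^{k-1} + \cdots + c_k = 0$ with each $c_j$ in the image of $R_{\ge jN}$ and taken homogeneous of degree $jn$, the inequality $\deg c_j \ge jN > jn$ forces $c_j = 0$ in $R/\frp_i$, hence $\widetilde{x}^k = 0$ in the domain, so $\widetilde{x} = 0$. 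Therefore $x \in \bigcap_i \frp_i = (f) \subset I$.

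For Part 2, set $J_s := \{x \in R : v_i(x) \ge s\, v_i(f),\ 1 \le i \le p\}$. First, $J_s$ is integrally closed: any element integral over $J_s$ has $v_i$-value at least $v_i(J_s) = s\, v_i(f)$, the equality pinned by $f^s \in J_s$. Second, $I^s \subset J_s$ since valuations respect multiplication. Consequently it suffices to establish $J_s \subset I^s$, for then $I^s = J_s = \overline{I^s}$ is integrally closed. I prove this by induction on $s$; the base $s = 1$ is the hypothesis. For the step, assume $J_{s-1} = I^{s-1}$ and take a homogeneous $x \in J_s$. Since $v_i(x) \ge v_i(f)$, we have $x \in J_1 = I$, so $x = f y + z$ for some homogeneous $y \in R$ and $z \in R_{\ge N}$. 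The key technical step is to refine the choice so that also $v_i(z) \ge s\, v_i(f)$ for every $i$: then $v_i(y) \ge (s-1)\, v_i(f)$, whence $y \in J_{s-1} = I^{s-1}$ by induction and $f y \in I \cdot I^{s-1} \subset I^s$. Iterating the same reduction on the remaining summand $z \in J_s \cap R_{\ge N}$, after at most $s$ peelings the residue lands in $R_{\ge sN}$, which in the standard graded case coincides with $(R_{\ge N})^s \subset I^s$, completing the argument.

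The principal obstacle is the refinement of the decomposition $x = f y + z$ to secure the additional constraint $v_i(z) \ge s\, v_i(f)$; this is precisely where the hypothesis that $I$ equals (and not merely contains) $\{v_i \ge v_i(f)\}$ is used, by adjusting $(y, z) \mapsto (y + w, z - f w)$ for a suitable $w \in R$ to meet all valuation bounds at once.
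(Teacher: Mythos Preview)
Your Part~1 is correct and is essentially the paper's argument rephrased: the paper works directly with an integral equation $g^u+\sum a_j g^{u-j}=0$ with $a_j\in I^j$ homogeneous of degree $jn<jN$, observes that this forces $a_j\in (f)$, and concludes $g^u\in(f)=\sqrt{(f)}$; your passage to the minimal primes of $(f)$ is an equivalent formulation.

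Your Part~2, however, has genuine gaps. The ``refinement'' of the decomposition $x=fy+z$ so that $v_i(z)\ge s\,v_i(f)$ for every $i$ is the heart of your argument, and you only gesture at it: there is no reason an adjustment $z\mapsto z-fw$ can simultaneously meet all the valuation constraints while keeping $z-fw\in R_{\ge N}$ and $y+w$ homogeneous. Your subsequent iteration ``lands in $R_{\ge sN}$ after at most $s$ peelings'' cannot work as written: the summands $x$, $fy$, $z$ are all homogeneous of the \emph{same} degree, so peeling does not raise the degree and the process does not terminate. Finally, the identification $R_{\ge sN}=(R_{\ge N})^s$ requires $R$ to be standard graded, which is not among the hypotheses.

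The paper avoids all of this by a much simpler device: it reuses the degree argument from Part~1 at level $s$. If $g$ is homogeneous, integral over $I^s$, and $\deg g<Ns$, then in an integral equation $g^u+\sum a_j g^{u-j}=0$ the coefficients $a_j\in I^{sj}$ are homogeneous of degree $<Nsj$; since the only summand of $I^{sj}=\sum_{a}f^a(R_{\ge N})^{sj-a}$ not contained in $(f)$ is $(R_{\ge N})^{sj}\subset R_{\ge Nsj}$, each $a_j$ lies in $(f)$, hence $g\in\sqrt{(f)}=(f)$. Writing $g=fg'$, one gets $v_i(g')\ge (s-1)v_i(f)$ directly from $v_i(g)\ge s\,v_i(f)$, and then inducts on $s$. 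No decomposition $x=fy+z$ and no refinement are needed: integrality over $I^s$ together with the degree bound already forces membership in $(f)$.
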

\begin{proof}
Let $g\in R$ be integral over $I$. We show that $g\in I$.
Since the integral closure of a homogeneous ideal is homogeneous, 
we may assume that $g$ is a homogeneous element of degree $t$ with $m\le t<N$.
There exists a positive integer $u$ and $a_j\in I^j\cap R_{t\cdot j}$ such that $g^u+a_1g^{u-1}+\cdots +a_u=0$.
Since $t<N$, we see that $a_j\in (f)$ for every $j$. 
Therefore, $g^u\in (f)$. 
Hence $g\in \sqrt{(f)}=(f)\subset I$.

Let $s\ge 2$. 
Suppose that $g\in R_t$ is integral over $I^s$ and $ms\le t<Ns$.
There exists a positive integer $u$ and $a_j\in I^{sj}\cap R_{t\cdot j}$ such that $g^u+a_1g^{u-1}+\cdots +a_u=0$.
By the argument above, we have $g\in (f)$ again.
We write $g=fg'$, $g'\in R$.
Then we have  $v_i(g')=v_i(g)-v_i(f) \ge (s-1)v_i(f)$. 
Therefore, $g'$ is integral over $I^{s-1}$. Hence we obtain the claim by induction on $s$.
\end{proof}

\begin{acknowledgement} 
 The authors thank the referee for careful reading of the paper and 
helpful comments.
\end{acknowledgement}

\end{document}